\def\bt{\begin{thm}}
\def\et{\end{thm}}
\def\bl{\begin{lem}}
\def\el{\end{lem}}
\def\bd{\begin{defn}}
\def\ed{\end{defn}}
\def\bc{\begin{cor}}
\def\ec{\end{cor}}
\def\bp{\begin{proof}}
\def\ep{\end{proof}}
\def\br{\begin{rem}}
\def\er{\end{rem}}
\newtheorem{thm}{Theorem}[section]
\newtheorem{prop}[thm]{Proposition}
\newtheorem{lem}[thm]{Lemma}
\newtheorem{defn}[thm]{Definition}
\newtheorem{rem}[thm]{Remark}
\newtheorem{cor}[thm]{Corollary}
\numberwithin{equation}{section}
\newcommand{\bthm}{\begin{thm}}
\newcommand{\ethm}{\end{thm}}
\newcommand{\bstp}{\begin{stp}}
\newcommand{\estp}{\end{stp}}
\newcommand{\blemma}{\begin{lemma}}
\newcommand{\elemma}{\end{lemma}}
\newcommand{\bprop}{\begin{prop}}
\newcommand{\eprop}{\end{prop}}
\newcommand{\bpf}{\begin{pf}}
\newcommand{\epf}{\end{pf}}
\newcommand{\bdefn}{\begin{defn}}
\newcommand{\edefn}{\end{defn}}
\newcommand{\brk}{\begin{rmrk}}
\newcommand{\erk}{\end{rmrk}}
\newcommand{\bcrl}{\begin{crl}}
\newcommand{\ecrl}{\end{crl}}
\title[]{A central limit theorem associated with a sequence of positive line bundles}
\address{}
\address{Faculty of Engineering and Natural Sciences, Sabanc{\i} University, \.{I}stanbul, Turkey}
\email{afrimbojnik@sabanciuniv.edu}
\email{ozangunyuz@alumni.sabanciuniv.edu}
\date{}
\keywords{random holomorphic section, central limit theorem, compact K\"{a}hler manifold, Bergman kernel.}
\subjclass[2020]{Primary: 32A60, 60F05, 32A25; Secondary: 53C55}
\begin{document}

\author{Afrim Bojnik}
\author{Ozan Günyüz}

\thanks{A.\ Bojnik was partially supported by T\"{U}B\.{I}TAK grant ARDEB-3501/118F049}

\begin{abstract}
We  prove a central limit theorem for smooth linear statistics associated with zero divisors of standard Gaussian holomorphic sections in a sequence of holomorphic line bundles with Hermitian metrics of class $\mathscr{C}^{3}$ over a compact K\"{a}hler manifold. In the course of our analysis, we derive first-order asymptotics and upper decay estimates for near and off-diagonal Bergman kernels, respectively. These results are essential for determining the statistical properties of the zeros of random holomorphic sections.
  \end{abstract}
\maketitle

%%%%%%%%%%%%%%%%%%%%%%%%%%%%%%%%%%%%%%%%%%%%%%%%%%%%%%%%%%%%%%%%%%%%%%%%%%%%%%%

\section{Introduction}\label{S2}

Over recent years, there has been a growing interest in probabilistic challenges within the scope of complex algebraic or analytic geometry. Such problems initially arose from studying the zeros of random polynomials and other analytic functions, analyzed by various authors under different assumptions on random coefficients. The field gained momentum with the foundational works of Bloch-Pólya\cite{BP}, Littlewood-Offord \cite{LO43}, Hammersley \cite{HAM56}, Kac \cite{Kac43}, and Erdös-Turán \cite{ET50}, and has now become a classical area of study.  A significant breakthrough was achieved by Shiffman and Zelditch \cite{SZ99}, who extended these ideas into complex geometric setting. They demonstrated that zeros of random sections of high tensor powers of a positive line bundle $L$ on a projective manifold $X$ tend to become uniformly distributed in accordance with the natural measure from line bundle $L$. In a series of papers, these authors further delved into the correlations, variance, expected distribution and other statistical properties related to zeros of holomorphic sections, as elaborated in references \cite{BSZ,SZ08,SZ10}. In this framework, Dinh and Sibony \cite{DS06} innovated a method to understand zero distribution by applying formalism of meromorphic transforms from complex dynamics, and set convergence speed bounds in the compact case, enhancing Shiffman and Zelditch's initial results. For a comprehensive overview, especially in the light of more general probability distributions and geometric backgrounds, we invite the reader to look into the survey \cite{BCHM}.
Furthermore, it is important to emphasize that random polynomials and holomorphic sections have a significant role in theoretical physics. Holomorphic random sections, in particular, are employed as an important model for quantum chaos, leading to extensive research on the distribution of their zero points by physicists, see for example \cite{BBL, Hann, NV98}.

Alongside these developments, investigation of the central limit theorem in the context of smooth linear statistics, such as integrals of smooth test forms over zero divisors of random holomorphic sections, is another intriguing challenge. In this area of research, the work by Sodin and Tsirelson \cite{ST} is particularly influential. They established the following asymptotic normality result for Gaussian random polynomials and analytic functions in the complex plane. This result stands as a vital instrument in our study as well.

\subsection{Theorem of Sodin and Tsirelson}
Given a sequence $\{\gamma_{j}\}_{j=1}^{\infty}$ of complex-valued measurable functions on the measure space ($G, \sigma$) such that \begin{equation}\sum_{j=1}^{\infty}{|\gamma_j(x)|^{2}}=1\,\,\text{for}\,\,\text{any}\,\,x\in G, \end{equation} following \cite{ST} (and also \cite{SZ10}), a \textit{normalized complex Gaussian process} is defined to be a complex-valued random function $\alpha(x)$ on a measure space $(G, \sigma)$ in the following form \begin{equation}\label{gpro}
                                                      \alpha(x)=\sum_{j=1}^{\infty}{b_{j} \gamma_j(x)},
                                                    \end{equation}where the coefficients $b_{j}$ are i.i.d. centered complex Gaussian random variables with variance one.
                                                 The covariance function of $\alpha(x)$ is defined by \begin{equation}\label{cov}
                                                 	\mathcal{C}(x, y)= \mathbb{E}[\alpha(x) \overline{\alpha(y)}]= \sum_{j=1}^{\infty}{\gamma_{j}(x) \overline{\gamma_{j}(y)}}. \end{equation}
A simple observation gives that $|\mathcal{C}(x, y)| \leq 1$ and $\mathcal{C}(x, x)=1$.

Consider a sequence $\{\alpha_{j}\}_{j=1}^{\infty}$ of normalized complex Gaussian processes on a finite measure space $(G, \sigma)$, and let $\lambda(\rho)\in L^2(\mathbb{R}^{+}, e^{\frac{-\rho^2}{2}}\rho d\rho)$. Suppose $\phi: G \rightarrow \mathbb{R}$ is a bounded and measurable function, we will focus on the following non-linear functionals that also serve as random variables in this context. \begin{equation} \label{nonlin} \mathcal{F}^{\phi}_{p}(\alpha_{p})=\int_{G}{\lambda(|\alpha_{p}(x)|)\phi(x)d\sigma(x)}. \end{equation}

The next theorem (Theorem 2.2 of \cite{ST}) was proved by Sodin and Tsirelson.

\begin{thm}\label{sots}
For each $p=1, 2, \ldots$, let $\mathcal{C}_{p}(r, s)$ be the covariance functions for the complex Gaussian processes. Assume that the two conditions below hold for all $\nu \in \mathbb{N}$:

\begin{itemize}
 \item[(i)] $$\liminf_{p\rightarrow \infty}{\frac{\int_{G}\int_{G}{|\mathcal{C}_{p}(r, s)|^{2\nu}\phi(r)\phi(s)d\sigma(r)d\sigma(s)}}{\sup_{r\in G} \int_{G}{|\mathcal{C}_{p}(r, s)|d\sigma(s)}}}>0.$$
  \item[(ii)] $$\lim_{p\rightarrow \infty}{\sup_{r\in G} \int_{G}{|\mathcal{C}_{p}(r, s)|d\sigma(s)}}=0.$$
\end{itemize}

Then the distributions of the random variables \begin{equation*}
                                                 \frac{\mathcal{F}^{\phi}_{p}(\alpha_{p})- \mathbb{E}[\mathcal{F}^{\phi}_{p}(\alpha_{p})]}{\sqrt{\mathrm{Var}[\mathcal{F}^{\phi}_{p}(\alpha_{p})]}}
                                               \end{equation*} converges weakly to the normal distribution $\mathcal{N}(0, 1)$ as $p \rightarrow \infty$. If $\lambda$ is increasing, then it is sufficient for $(i)$ to hold only for $\nu=1$.
\end{thm}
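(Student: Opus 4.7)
The plan is to verify the classical cumulant (method-of-moments) criterion for asymptotic normality: after centering and normalizing by the standard deviation, it suffices to show that every cumulant of order $m\geq 3$ of $\mathcal{F}^{\phi}_p(\alpha_p)$ vanishes as $p\to\infty$. Since $\alpha_p$ is a complex Gaussian field and $\lambda(|\cdot|)$ is rotationally invariant, the natural tool is a Laguerre chaos expansion of $\lambda$, so that every cumulant becomes a weighted sum of cyclic integrals of powers of $|\mathcal{C}_p|$.

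The first step is the chaos expansion. Since $|\alpha_p(x)|$ has density $\rho e^{-\rho^2/2}$, write $\lambda(\rho)=\sum_{k\geq 0}c_k L_k(\rho^2/2)$ in the Laguerre orthonormal basis of $L^2(\mathbb{R}^+,\rho e^{-\rho^2/2}d\rho)$, with $\sum|c_k|^2<\infty$. The diagonal (Kibble--Slepian) identity for complex Gaussian fields
\[
\mathbb{E}\bigl[L_j(|\alpha_p(r)|^2/2)\,L_k(|\alpha_p(s)|^2/2)\bigr]=\delta_{jk}|\mathcal{C}_p(r,s)|^{2k}
\]
then yields
\[
\mathrm{Var}[\mathcal{F}^{\phi}_p]=\sum_{k\geq 1}|c_k|^2\iint_{G\times G}|\mathcal{C}_p(r,s)|^{2k}\phi(r)\phi(s)\,d\sigma(r)d\sigma(s).
\]
Applying hypothesis $(i)$ at any $\nu$ with $c_\nu\neq 0$ (which exists since $\lambda$ is non-trivial) produces the uniform lower bound
\[
\mathrm{Var}[\mathcal{F}^{\phi}_p]\gtrsim \sup_{r\in G}\int_G|\mathcal{C}_p(r,s)|\,d\sigma(s).
\]

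The second step bounds the higher cumulants. A combinatorial expansion over connected Feynman diagrams arising from Wick products of the $L_{k_i}$'s gives
\[
\kappa_m(\mathcal{F}^{\phi}_p)=\sum_{k_1,\dots,k_m\geq 1}c_{k_1}\cdots c_{k_m}\sum_{D\text{ connected}}\int_{G^m}\prod_{i=1}^m\phi(x_i)\prod_{(i,j)\in D}|\mathcal{C}_p(x_i,x_j)|^{2\beta_{ij}^D}d\sigma(x_1)\cdots d\sigma(x_m).
\]
Since each connected diagram on $m$ vertices contains a spanning tree, the estimate $|\mathcal{C}_p|\leq 1$ on all non-tree edges combined with Fubini along the tree yields
\[
\Bigl|\int_{G^m}\prod_i\phi(x_i)\prod_{(i,j)\in D}|\mathcal{C}_p|^{2\beta_{ij}^D}d\sigma(x_1)\cdots d\sigma(x_m)\Bigr|\leq\|\phi\|_\infty^m\,\sigma(G)\,\Bigl(\sup_{r\in G}\int_G|\mathcal{C}_p(r,s)|\,d\sigma(s)\Bigr)^{m-1}.
\]
Combining this with the variance lower bound gives
\[
\frac{|\kappa_m(\mathcal{F}^{\phi}_p)|}{\mathrm{Var}[\mathcal{F}^{\phi}_p]^{m/2}}\leq C_m\Bigl(\sup_{r\in G}\int_G|\mathcal{C}_p(r,s)|\,d\sigma(s)\Bigr)^{m/2-1}\longrightarrow 0
\]
for $m\geq 3$ by $(ii)$, completing the CLT via the method of cumulants.

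The main obstacle is the bookkeeping required to make the Laguerre expansion absolutely convergent at every stage: the mere hypothesis $\sum|c_k|^2<\infty$ is weaker than what a naive summation over the indices $(k_1,\dots,k_m)$ demands, so one must apply Cauchy--Schwarz at the diagrammatic level and use $|\mathcal{C}_p|\leq 1$ to manufacture the missing decay. A secondary point is the reduction to $\nu=1$ for increasing $\lambda$: an integration by parts gives $c_1=-\int_0^\infty\lambda'(\sqrt{2u})\sqrt{u/2}\,e^{-u}du$, so monotonicity forces $c_1\neq 0$, whence the $\nu=1$ integral in $(i)$ already dominates $\mathrm{Var}[\mathcal{F}^{\phi}_p]$ from below and the higher-$\nu$ assumptions become redundant.
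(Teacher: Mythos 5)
The paper itself does not prove this theorem: it is quoted from Sodin and Tsirelson \cite{ST}, and the text only gives a one-paragraph description of the method (method of moments plus the diagram technique). So the comparison is really between your sketch and the original Sodin--Tsirelson proof, which your proposal follows fairly closely in outline: Laguerre (Wiener) chaos expansion of $\lambda$, the complex Mehler identity $\mathbb{E}[L_j(|\alpha(r)|^2/2)L_k(|\alpha(s)|^2/2)]=\delta_{jk}|\mathcal{C}(r,s)|^{2k}$ yielding the variance formula, connected-diagram expansion of the higher cumulants, and a spanning-tree bound that uses $|\mathcal{C}_p|\le 1$ on non-tree edges and one factor of $\sup_r\int_G|\mathcal{C}_p(r,s)|\,d\sigma(s)$ per tree edge. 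The tree bound itself, the variance lower bound extracted from $(i)$, and the integration-by-parts computation $c_1=-\int_0^\infty\lambda'(\sqrt{2u})\sqrt{u/2}\,e^{-u}\,du$ showing $c_1\neq 0$ for an increasing non-constant $\lambda$ are all sound.

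The genuine gap is exactly the one you flag at the end but do not close. Under the hypothesis $\sum|c_k|^2<\infty$ alone, the constant $C_m$ in your cumulant estimate is not obviously finite: the number of connected Wick pairings with vertex degrees $(k_1,\dots,k_m)$ grows factorially in $\sum_i k_i$, so the sum $\sum_{k_1,\dots,k_m}|c_{k_1}|\cdots|c_{k_m}|\cdot(\text{number of connected diagrams})$ need not converge, and the phrase ``apply Cauchy--Schwarz at the diagrammatic level'' does not by itself resolve this. The standard repair, and the one Sodin--Tsirelson effectively use, is a truncation: set $\lambda_N=\sum_{k\le N}c_kL_k$, prove the cumulant bound and the CLT for the finite-chaos functional $\mathcal{F}^{\phi,N}_p$ (for which the diagram count is finite and your estimate applies verbatim), and then show that the tail $R_N=\mathcal{F}^{\phi}_p-\mathcal{F}^{\phi,N}_p$ satisfies $\mathrm{Var}(R_N)/\mathrm{Var}(\mathcal{F}^{\phi}_p)\le C\sum_{k>N}|c_k|^2\to 0$ uniformly in large $p$, using $|\mathcal{C}_p|\le 1$ together with the variance lower bound supplied by $(i)$. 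With that approximation step made explicit your argument closes; without it the summation over chaos indices is unjustified.
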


The proof is based on applying the method of moments, a fundamental tool in probability theory, coupled with the use of the diagram technique. This approach facilitates the computation of moments for non-linear functionals, which are then compared to the moments found in the standard Gaussian distribution. Such an approach is a classical one in establishing the central limit theorem for non-linear functionals within Gaussian fields. We also remark that the condition (ii) ensures that $\mathrm{Var}[\mathcal{F}_{p}^{\phi}(\alpha_{p})]\rightarrow 0$ as $p\rightarrow \infty$.

There are two extensions of Theorem \ref{sots} in different contexts. The first, due to the work of Shiffman and Zelditch (\cite{SZ10}), applies to the prequantum line bundle setting. This involves random holomorphic sections of a Hermitian line bundle (with $\mathscr{C}^{3}$ Hermitian metrics) over a compact Kähler manifold, where the first Chern form and K\"{a}hler form satisfy the prequantum line bundle condition. The second extension is studied in \cite{Bay17}, where Bayraktar expanded Theorem \ref{sots} to encompass general random polynomials in $\mathbb{C}^{n}$ with techniques of weighted pluripotential theory.

 The authors of the current paper have recently proved a central limit theorem for smooth linear statistics related to the zero divisors of random holomorphic sections of large tensor powers of a positive Hermitian holomorphic line bundle over a non-compact Hermitian manifold, see \cite{BG1} for further details.

In all these adaptations, the asymptotic behavior of the normalized Bergman kernel, particularly off-diagonal and near-diagonal, proves to be critical. This kernel, functioning as a covariance function of normalized complex Gaussian processes, is pivotal to the analyses. Separately, Nazarov and Sodin (\cite{NS}) explored the asymptotic normality of linear statistics of zeros in Gaussian entire functions on $\mathbb{C}$. Their approach is broader, considering measurable bounded test functions and the clustering of $k$-point correlation functions.

The primary objective of the current study is to apply Theorem \ref{sots} to standard Gaussian holomorphic sections in a sequence of positive holomorphic line bundles with Hermitian metrics of class $\mathscr{C}^{3}$ on a compact Kähler manifold. The following is the main result of the paper.

\begin{thm}\label{maint}
Let $\{(L_{p}, h_{p})\}^{\infty}_{p\geq 1}$ be a sequence of positive holomorphic line bundles over a compact K\"{a}hler manifold $(X, \omega)$ of dimension $n$ with diophantine condition (\ref{appr}) and Hermitian metrics of class $\mathscr{C}^{3}$ such that $\frac{\|h_{p}\|^{1/3}_{3}}{\sqrt{A_{p}}} \rightarrow 0$ as $p\rightarrow \infty$. Suppose that $H^{0}(X, L_{p})$ is endowed with the standard Gaussian probability measure for all $p \geq 1$. For $s_{p}\in H^{0}(X, L_{p})$ and a real valued $(n-1, n-1)$-form  $\phi$ on $X$ with $\mathscr{C}^{3}$-coefficients and $dd^{c} \phi \not\equiv 0$, the distributions of the random variables \begin{equation}\label{clte}
                                                                       \frac{\langle [Z_{s_{p}}], \phi \rangle - \mathbb{E}\langle [Z_{s_{p}}], \phi \rangle}{\sqrt{\mathrm{Var}\langle [Z_{s_{p}}], \phi \rangle}}
                                                                     \end{equation} weakly converge towards the standard (real) Gaussian distribution $\mathcal{N}(0, 1)$ as $p\rightarrow \infty$.

\end{thm}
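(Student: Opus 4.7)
The plan is to reduce Theorem~\ref{maint} to the Sodin--Tsirelson criterion (Theorem~\ref{sots}) applied with the nonlinearity $\lambda(\rho)=\log\rho$, which is increasing and lies in $L^{2}(\mathbb{R}^{+},e^{-\rho^{2}/2}\rho\,d\rho)$. The covariance kernels of the relevant Gaussian processes will be the normalized Bergman kernels of $\{(L_{p},h_{p})\}_{p\geq 1}$, so asymptotic normality of \eqref{clte} boils down to checking the two integral conditions of Theorem~\ref{sots} for those kernels.

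First, fix an $L^{2}$-orthonormal basis $\{S^{p}_{j}\}$ of $H^{0}(X,L_{p})$ and write the standard Gaussian section as $s_{p}=\sum_{j}a_{j}S^{p}_{j}$ with $a_{j}$ i.i.d.\ standard complex Gaussian. By Poincar\'e--Lelong and integration by parts against $\phi$,
\[
\langle [Z_{s_{p}}],\phi\rangle = \int_{X}\log|s_{p}|_{h_{p}}\, dd^{c}\phi.
\]
Factor $|s_{p}(x)|_{h_{p}}^{2}=B_{p}(x)\,|\alpha_{p}(x)|^{2}$, where $B_{p}(x)=\sum_{j}|S^{p}_{j}(x)|^{2}_{h_{p}}$ is the Bergman kernel on the diagonal and
\[
\alpha_{p}(x)=B_{p}(x)^{-1/2}\sum_{j}a_{j}S^{p}_{j}(x)
\]
is a normalized complex Gaussian process with covariance the normalized Bergman kernel $\mathcal{C}_{p}(x,y)=B_{p}(x,y)/\sqrt{B_{p}(x)B_{p}(y)}$. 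Then
\[
\langle [Z_{s_{p}}],\phi\rangle = \int_{X}\log|\alpha_{p}(x)|\, dd^{c}\phi(x) + \tfrac{1}{2}\int_{X}\log B_{p}(x)\, dd^{c}\phi(x),
\]
the second, deterministic term cancels in the centered-and-normalized variable \eqref{clte}, and the random part is exactly of the form $\mathcal{F}^{\psi}_{p}(\alpha_{p})$ in \eqref{nonlin} with $\lambda=\log$ and $\psi=dd^{c}\phi$ (decomposing the signed $\psi$ into its positive and negative parts, as is standard).

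It remains to verify the hypotheses of Theorem~\ref{sots} for $\mathcal{C}_{p}$; because $\lambda$ is increasing only $\nu=1$ is needed. Both hypotheses reduce to estimates on $\int_{X}|\mathcal{C}_{p}(x,y)|^{k}\, d\mathrm{vol}(y)$ for $k=1,2$. For condition~(ii) (which also controls the denominator in~(i)), I would invoke the off-diagonal upper decay estimate for the Bergman kernel promised in the abstract: under the diophantine condition (\ref{appr}) together with $\|h_{p}\|_{3}^{1/3}/\sqrt{A_{p}}\to 0$, the normalized kernel decays sub-Gaussianly in $\sqrt{A_{p}}\,d(x,y)$, so that $\sup_{x}\int_{X}|\mathcal{C}_{p}(x,y)|\, d\mathrm{vol}(y)=O(A_{p}^{-n})\to 0$. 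For the numerator in condition~(i), I would use the first-order near-diagonal asymptotics: in K\"ahler normal coordinates centered at $x$ and with $y=x+A_{p}^{-1/2}u$, the rescaled $\mathcal{C}_{p}$ converges to the model Bargmann--Fock kernel $e^{-|u|^{2}/2}$, producing the matching lower bound $\int_{X}|\mathcal{C}_{p}(x,y)|^{2}\, d\mathrm{vol}(y)\gtrsim A_{p}^{-n}$ uniformly in $x$. Coupling this with the hypothesis $dd^{c}\phi\not\equiv 0$ (so that the double integral in~(i) picks up a genuine diagonal contribution weighted by $\psi(x)\psi(y)$) yields a strictly positive liminf of the ratio in (i).

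The main obstacle is therefore not the reduction itself, which follows the Shiffman--Zelditch/Bayraktar template and their adaptation in \cite{BG1}, but establishing the two Bergman kernel inputs in the \emph{sequence} regime considered here: a first-order near-diagonal expansion and a matching uniform off-diagonal decay, valid for a varying sequence of positive line bundles rather than tensor powers of a fixed one, and for metrics of only finite $\mathscr{C}^{3}$ regularity. The limited regularity rules out the usual peak-section/Bochner--Kodaira arguments, so one expects to work with a localized H\"ormander $\bar\partial$-estimate on charts rescaled by $\sqrt{A_{p}}$, with the diophantine condition serving precisely to make the rescaled local models comparable to a common Bargmann--Fock kernel along the sequence. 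Once these two Bergman kernel estimates are in hand, the verification of (i)--(ii) and the appeal to Theorem~\ref{sots} are routine.
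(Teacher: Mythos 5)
Your proposal is correct and follows essentially the same route as the paper: both reduce the problem to Theorem~\ref{sots} with $\lambda=\log$ via the Poincar\'e--Lelong formula, identify the covariance of the normalized process $\alpha_p$ with the normalized Bergman kernel $\widehat{\mathcal{K}}_p$, and then verify conditions (i) and (ii) of the Sodin--Tsirelson theorem by combining the exponential off-diagonal decay (Theorem~\ref{udeb}) with the near-diagonal first-order asymptotics (Theorems~\ref{bkad} and~\ref{locunif}), the deterministic terms from Poincar\'e--Lelong cancelling in the centered, normalized statistic. You correctly identify that the genuine content lies in establishing these two Bergman kernel estimates for a sequence of line bundles with only $\mathscr{C}^3$ metrics under the diophantine condition, which the paper carries out via Demailly-type $L^2$ estimates on rescaled reference charts exactly as you anticipate.
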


As seen from Theorem \ref{maint}, the non-linear functionals $\mathcal{F}_{p}^{\phi}$ in Theorem \ref{sots} are replaced by so-called \textit{smooth linear statistics} (see Section \ref{lastsec}) in the current setting.

To achieve this, we start by setting up our geometric and probabilistic framework, building upon the foundations in earlier studies \cite{CLMM}, \cite{CMM}. The key to our analysis is the upper decaying estimate of the off-diagonal Bergman kernel and the first order asymptotics of the Bergman kernel function. In this context, for the asymptotic distribution and variance estimates of smooth linear statistics of random zero sets in any codimension under more general probability measures, we refer the interested reader to \cite{BG}.

\section{Background} \label{Sec2}

Let $(X, J, \omega)$ be a compact K\"{a}hler manifold of $\dim_{\mathbb{C}}{X}=n$, with the complex structure $J$, and the K\"{a}hler form $\omega$ on it. Let $\{(L_{p},h_{p})\}^{\infty}_{p \geq 1}$ be a sequence of positive line bundles with Hermitian $\mathscr{C}^{3}$-metrics $h_{p}$. In this paper, we employ the concepts of Nakano and Griffiths positivity for vector bundles. It is important to note that Nakano positivity implies Griffiths positivity in vector bundles, though the reverse is not always true (refer to \cite{Dem12} for more details). However, in the case of line bundles, these two notions of positivity coincide.  Therefore, we define positivity for a holomorphic line bundle (Hermitian, but not necessarily with $\mathscr{C}^{\infty}$-metrics), denoted as $(L, h)$, in the following way: a line bundle is considered positive or semi-positive if its local weight functions
 $\varphi$, which are of class $\mathscr{C}^{2}$ and corresponding to the metric $h$, are strictly plurisubharmonic and plurisubharmonic, respectively.

In the present setting, the associated local weight functions $\varphi$ are already of class $\mathscr{C}^{3}$ and by Nakano-Griffiths positivity, they will also satisfy $dd^{c} \varphi >0$ at every point. This also indicates, by Grauert's result (see, for instance, Proposition 2.4 of [CMM]), that such line bundles are ample and $X$ is projective

We denote the global holomorphic sections of $L_p$ by $H^{0}(X, L_{p})$. In this context, we consider an inner product on the space of smooth sections $\mathscr{C}^{\infty}(X,L_{p})$, using the metric $h_{p}$ and the volume form $\frac{\omega^{n}}{n!}$ on $X$. This inner product is defined  as \begin{equation} \label{innp}\langle s_{1}, s_{2} \rangle_{p}:=\int_{X}{\langle s_{1}(x), s_{2}(x)\rangle_{h_{p}} \frac{\omega^{n}}{n!}},\end{equation} and the norm of a section $s$ is given by $\|s\|^{2}_{p}:=\langle s, s \rangle_{p}$.
Based on the Cartan-Serre finiteness theorem, we know that $H^{0}(X, L_{p})$ is finite-dimensional. We denote the dimension of this space as $d_{p}:= \dim{H^{0}(X, L_{p})}$. Subsequently, we consider $\mathcal{L}^{2}(X, L_{p})$, which is the completion of $\mathscr{C}^{\infty}(X,L_{p})$ under this norm, forming a Hilbert space of square-integrable sections of $L_{p}$.

Next, we define the orthogonal projection operator $K_{p}:\mathcal{L}^{2}(X, L_{p}) \rightarrow H^{0}(X, L_{p})$. The Bergman kernel, $K_{p}(x, y)$, is the integral kernel of this projection. If $\{S^{p}_{j}\}_{j=1}^{d_{p}}$ is an orthonormal basis for $H^{0}(X, L_{p})$, by using reproducing property of $K_{p}(x, y)$, we express $K_{p}(x,y)$ in terms of this basis as follows:  \begin{equation}\label{offds} K_{p}(x, y)=\sum_{j=1}^{d_{p}}{S^{p}_{j}(x)\otimes S^{p}_{j}(y)^{*}}\in L_{p, x}\otimes L_{p, y}^{*},\end{equation}where $S^{p}_{j}(y)^{*}=\langle \, .\,, S^{p}_{j}(y)\rangle_{h_{p}}\in L^{*}_{p, y}$. The restriction of the Bergman kernel to the diagonal of $X$ is called the Bergman kernel function of $H^{0}(X, L_{p})$, which we denote by $K_{p}(x):= K_{p}(x, x)$, and (\ref{offds}) becomes  \begin{equation}\label{diags}K_{p}(x)=\sum_{j=1}^{d_{p}}{|S^{p}_{j}(x)|_{h_{p}}^{2}}.\end{equation}The Bergman kernel function has the dimensional density property,  namely $$\int_{X}K_{p}(x)\frac{\omega^{n}}{n!}=d_{p}.$$
In addition, it satisfies the following variational principle
\begin{equation}\label{variat}
	K_{p}(x) = \max \{ |S(x)|^2_{h_{p}} : S \in H^0(X, L_{p}), \ ||S||_{p} = 1 \}.\end{equation}
This holds for every $x \in X$ for which $\varphi_{p}(x) > -\infty$, with $\varphi_{p}$ denoting a local weight, defined above, for the metric $h_{p}$ in the vicinity of $x$.

We also define the normalized Bergman kernel as follows: \begin{equation*}
	\widehat{\mathcal{K}}_{p}(x, y):= \frac{|K_{p}(x, y)|_{h_{p, x} \otimes h^{*}_{p, y}}}{\sqrt{K_{p}(x)} \sqrt{K_{p}(y)}}.
\end{equation*}
which will be important throughout.

\subsection{Reference Covers} \label{metricss}

 $(U, z),\,\,z=(z_{1}, \ldots, z_{n}),$ will indicate local coordinates centered at a point $x\in X$. The closed polydisk around $y\in U$ of equilateral radius $(r, \ldots, r), \,r>0,$ is given by \begin{equation*}
                                                 P^{n}(y, r):=\{z\in U: |z_{j}- y_{j}| \leq r, \,\,j=1, 2, \ldots, m\}.
                                               \end{equation*} The coordinates $(U, z)$ are said to be K\"{a}hler at $y\in U$ in case \begin{equation} \label{kahcor}
                                                                                      \omega_{z}=\frac{i}{2}\sum_{j=1}^{m}{dz_{j} \wedge d\overline{z_{j}}} + \frac{i}{2}\sum_{j, k}{O(|z-y|^{2})dz_{j} \wedge d\overline{z_{k}}} \,\,\,\text{on}\,\,\,U.
                                                                                    \end{equation}

\begin{defn}
  \label{refed} A \textit{reference cover} of $X$ is defined as follows: for $j=1, 2, \ldots, N$, a set of points $x_{j}\in X$ and
\begin{itemize}
  \item[(a)] Stein open simply connected coordinate neighborhoods $(U_{j}, w^{(j)})$ centered at $x_{j}\equiv 0$.
  \item[(b)] $R_{j}>0$ such that $P^{n}(x_{j}, 2R_{j}) \Subset U_{j}$ and for every $y\in P^{n}(x_{j}, 2R_{j})$ there exist coordinates on $U_{j}$ which are K\"{a}hler at $y$.
  \item[(c)]$X=\bigcup_{j=1}^{N}{P^{n}(x_{j}, R_{j})}$.
\end{itemize}We will write $R=\min{R_{j}}$ once a reference cover is provided. \end{defn}It is not difficult to see how one can construct a reference cover. Indeed, first, for $x\in X$, take a Stein open simply connected neighborhood (for instance, a round ball in $\mathbb{C}^{n}$) $U$ of $0 \in \mathbb{C}^{n}$, where $x \equiv 0$ under a determined chart. Choose some $R >0$ so that $P^{n}(x, R) \Subset U$ and for every $y\in P^{n}(x, R)$ there exist K\"{a}hler  coordinates $(U, z)$ at $y$. The compactness of $X$ implies that there exist finitely many points $\{x_{j}\}_{j=1}^{N}$ such that the three conditions above are satisfied.

We take into consideration the differential operators $D^{\alpha}_{w},\,\,\alpha \in \mathbb{N}^{2n}$ on $U_{j}$, corresponding to the real coordinates associated to $w=w^{j}$.  For $\varphi \in \mathscr{C}^{k}(U_{j})$, we define \begin{equation} \label{albeolm}
                                                                   \|\varphi\|_{k}=\|\varphi\|_{k, w}=\sup{\{|D^{\alpha}_{w} \varphi(w)|: w\in P^{n}(x_{j}, 2R_{j}), \,|\alpha|\leq  k\}}.
                                                                 \end{equation}
Let $(L, h)$ be a Hermitian holomorphic line bundle on $X$, i.e., the metric $h$ is smooth. For $k \leq l$, write \begin{equation*}
                  \|h\|_{k, U_{j}}= \inf{\{\|\varphi_{j}\|_{k}: \varphi_{j} \in \mathscr{C}^{l}(U_{j}) \,\,\text{is}\,\,\text{a}\,\,\text{weight}\,\,\text{of}\,\,h\,\,\text{on}\,\,U_{j}\}},
                \end{equation*} and
\begin{equation*}
\|h\|_{k}=\max{\{1, \|h\|_{k, U_{j}}: 1\leq j \leq N\}}.
\end{equation*}

 $\varphi_{j}$ is said to be a weight of $h$ on $U_{j}$ if there exists a holomorphic frame $e_{j}$ of $L$ on $U_{j}$ such that $|e_{j}|_{h}=e^{-\varphi_{j}}$.

\begin{lem} \label{refe}
	Let a reference cover of $X$ be given. Then there exists a constant $D>0$ relying on the reference cover with the following property: When provided with any Hermitian line bundle $(L, h)$ on $X$, any $j\in \{1, \ldots, N\}$ and any $x\in P^{n}(x_{j}, R_{j})$, there exist coordinates $z=(z_{1}, \ldots, z_{n})$ on $P^{n}(x, R)$ which are centered at $x \equiv 0$ and K\"{a}hler coordinates for $x$ such that \begin{itemize}
		\item[(i)]$ dV \leq (1+Dr^{2})\frac{\omega^{n}}{n!}$ and $\frac{\omega^{n}}{n!} \leq (1+ Dr^2) dV$ hold on $P^{n}(x, r)$ for any $r<R$, where $dV=dV(z)$ is the Euclidean volume relative to the coordinates $z$,
		\item[(ii)] $(L, h)$ has a weight $\varphi$ on $P^{n}(x, R)$ with $\varphi(z)=\Re{t(z)}+\sum_{j=1}^{n}{\lambda_{j}|z_{j}|^{2}}+ \widetilde{\varphi}(z)$, where $t$ is a holomorphic polynomial of degree at most $2$, $\lambda_{j}\in \mathbb{R}$ and $|\widetilde{\varphi}(z)| \leq D' \|h\|_{3} |z|^{3}$ for $z\in P^{n}(x, R)$.
\end{itemize} \end{lem}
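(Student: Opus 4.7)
The plan is to combine two standard constructions: the existence of K\"ahler coordinates at $x$ (provided directly by (b) of Definition \ref{refed}) and a degree-two Taylor expansion of a local weight of $h$, followed by a unitary normalization to diagonalize the Hermitian part at $0$. The content is mostly bookkeeping of constants.

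First I would use (b) of the reference cover to obtain coordinates on $U_j$ which are K\"ahler at $x$ in the sense of (\ref{kahcor}). After translating so that $x \equiv 0$ and restricting to a polydisk (we may take $R \le \min_j R_j$, possibly shrunk by a factor depending only on the cover to ensure $P^{n}(x, R) \subset P^n(x_j, 2R_j)$ under the change of charts), we obtain a chart $z$ in which $\omega_z = \frac{i}{2}\sum_k dz_k \wedge d\bar z_k + \frac{i}{2}\sum_{k,l} O(|z|^2)\, dz_k \wedge d\bar z_l$. Taking the $n$-fold wedge and comparing with the Euclidean volume form, the pointwise ratio $(\omega^n/n!)/dV$ equals $1 + O(|z|^2)$ with constant uniform over $\bigcup_j \overline{P^n(x_j, 2R_j)}$; this yields (i).

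For (ii), pick a weight $\varphi$ of $h$ on $U_j$ with $\|\varphi\|_3 \le 2\|h\|_3$ (using the infimum definition) and pull it back to the new coordinates. A Taylor expansion of $\varphi$ around $0$ to order two, grouped by bidegree and using the reality of $\varphi$ to pair conjugate contributions, gives
\[
\varphi(z) = \Re t_0(z) + \sum_{k,l} H_{kl}\, z_k \bar z_l + \widetilde\varphi_0(z),
\]
where $t_0$ is a holomorphic polynomial of degree $\le 2$ absorbing the $(0,0)$, $(1,0)$, $(2,0)$ parts (with their conjugates), $H_{kl} = \partial_k \overline{\partial}_l \varphi(0)$ is Hermitian, and Taylor's remainder estimate gives $|\widetilde\varphi_0(z)| \le C_n \|\varphi\|_3 |z|^3$ on $P^n(x,R)$ with $C_n$ dimensional. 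Now choose a unitary matrix $U$ with $U^* H U = \operatorname{diag}(\lambda_1,\ldots,\lambda_n)$ and perform the change of variables $z \mapsto Uz$. Unitarity preserves both the K\"ahler normal form at $0$ and Euclidean distances, so it preserves the structure of the decomposition while altering the $\mathscr{C}^3$-norm of the remainder only by a factor depending on $n$. After a final dimension-only shrinkage of $R$ to absorb the discrepancy between the max-norm polydisk and the Euclidean ball under $U$, the decomposition takes the required form $\varphi(z) = \Re t(z) + \sum_k \lambda_k |z_k|^2 + \widetilde\varphi(z)$ with $|\widetilde\varphi(z)| \le D'\|h\|_3 |z|^3$.

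The only real subtlety is ensuring that $D$ and $D'$ depend only on the reference cover and not on $h$ or $x$. Independence from $x$ comes from compactness of each $\overline{P^n(x_j, R_j)}$ together with finiteness of the cover, which makes the uniform $O(|z|^2)$ and $O(|z|^3)$ constants well-defined. Independence from $h$ is secured by the infimum-over-weights definition of $\|h\|_3$, combined with the compactness of the unitary group, which bounds the effect of $z \mapsto Uz$ on $\mathscr{C}^3$-norms by a universal constant. No individual step presents a genuine obstacle; the lemma is essentially a uniform version of the well-known K\"ahler normal form, repackaged for the reference-cover framework.
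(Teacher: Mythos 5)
Your proof is correct and follows essentially the same route as the paper: take the Kähler coordinates at $x$ provided by the reference cover, Taylor-expand a local weight to second order and split it into a holomorphic polynomial $\Re t$, a Hermitian quadratic form, and a cubic remainder controlled by $\|h\|_3$, then diagonalize the Hermitian part by a unitary change of coordinates, with uniformity of the constants coming from compactness and finiteness of the cover. The only cosmetic difference is that you explicitly note the need to shrink $R$ to keep the rotated polydisk inside the chart after applying $U$, a technicality the paper elides; otherwise the two proofs coincide step for step.
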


\begin{proof}
	By the definition of a reference cover, there exist coordinates $z$ on $U_{j}$ which are Kähler for $x \in P^{n}(x_{j}, R_{j})$. Then $\omega_{z} = \frac{i}{2}\sum_{l=1}^n dz_l \wedge d\overline{z}_l  + \frac{i}{2}\sum_{j, k}{O(|z-x|^{2})\,dz_{j} \wedge d\overline{z_{k}}} $ and (i) holds with a constant $D_{j}$ uniform for $x \in P^{n}(x_{j}, R_{j})$. Let $e_{j}$ be a frame of $L$ on $U_{j}$, $\varphi$ a weight of $h$ on $U_{j}$ with $|e_{j}|_h = e^{-\varphi}$ and $||\varphi||_{3,z} \leq 2||h||_{3}$. By translation, we may assume $x = 0$ and write $\varphi(z) = \Re{t(z)} + \varphi_{2}(z) + \varphi_{3}(z)$, where $t(z)$ is a holomorphic polynomial of degree $\leq 2$ in $z$, $\varphi_{2}(z) = \sum_{k,l=1}^n \mu_{kl}z_k\overline{z}_l$ and $\Re{f(z)} + \varphi_{2}(z)$ is the Taylor polynomial of $\varphi$ of order $2$ at $0$. In order to estimate $\varphi_{3}(z)$, let $\|\varphi\|_{3, z}$ be the supremum norm of the derivatives of $\varphi$ of order $3$ on $P^{n}(x_{j}, R_{j})$ in the $z$-coordinates. Then, by (\ref{albeolm}), there exists a constant $D'_{j}$ being uniform on $P^{n}(x_{j}, R_{j})$ such that  $||\varphi||_{3,z} \leq D'_j||\varphi||_{3,w}\leq 2D'_j||h||_{3}$, which also gives that $|\varphi_{3}(z)| \leq  2 D'_{j} \|h\|_{3} |z|^{3}$ for all $z\in P^{n}(x, R)$.
	
	Applying a unitary change of coordinates, we may suppose that $\varphi(\zeta)=\Re{t(\zeta)} + \sum_{j=1}^{n}{\lambda^{p}_{j}|\zeta_{j}|^{2}}+ \widetilde{\varphi}(\zeta)$. Under these coordinates, $\frac{w^{n}}{n!}$ and $\widetilde{\varphi}(\zeta)$ verify the required estimates with a uniform constant $D_{j}$ for $x \in P^{n}(x_{j}, R_{j})$, as unitary transformations preserve distances. Finally putting $D' = \max_{1 \leq j \leq N} D'_{j}$ finishes the proof.
	
\end{proof}
Let $(X, \omega)$ be a compact K\"{a}hler manifold of $\dim_{\mathbb{C}}{X}=n$. Following \cite{CLMM}, the diophantine approximation of the K\"{a}hler form $\omega$ is defined as follows:
\begin{equation}\label{appr}\frac{1}{A_{p}}c_{1}(L_{p}, h_{p}) = \omega + O(A_{p}^{-a}) \,\,\text{in}\,\,\text{the}\,\,\mathscr{C}^{0}\text{-topology}\,\,\text{as}\,\,p\rightarrow \infty, \end{equation}where $a>0$, $A_{p}>0$ and $\lim_{p\rightarrow \infty}{A_{p}}= \infty$. This means that \begin{equation}\label{cur}
\left \| \frac{1}{A_{p}}c_{1}(L_{p}, h_{p})-\, \omega \right \|_{\mathscr{C}^{0}} = O(A^{-a}_{p}),
 \end{equation}where $A_{p}> 0$,\, $a>0$ and  $\lim_{p\rightarrow \infty}{A_{p}}=+\infty$. This was first considered in \cite{CLMM} in the $\mathscr{C}^{\infty}$-norm topology induced by the Levi-Civita connection $\nabla^{TX}$ because the authors deal with the complete asymptotic expansion of the Bergman kernel restricted to the diagonal. We do not need such a strong topology, in fact, only the $\mathscr{C}^{0}$-norm (or continuous norm) topology will be sufficient for our aims.

 Let $\{U_{j}\}^{N}_{j=1}$ be a finite subcover of $X$. Locally, on each $U_{j}$, we have the following representations \begin{equation}\label{locch}
                                               \frac{1}{A_{p}}c_{1}(L_{p}, h_{p})(z)=i \sum_{k, j}{\frac{1}{ \pi} \frac{1}{A_{p}} \alpha_{kj}(z) dz_{k} \wedge d\overline{z_{j}}}
                                              \end{equation} and \begin{equation}\label{lock}
                                                                   \omega(z)=i \sum_{k, j}{\chi_{kj}(z) dz_{k} \wedge d\overline{z_{j}}}.
                                                                \end{equation}Here $[\chi_{kj}(z)]_{kj}$ is a positive definite Hermitian matrix, because in a K\"{a}hler manifold, we always have a strictly plurisubharmonic local potential function $\psi$ so that $$\chi_{kj}(z)= \frac{\partial^{2} \psi}{\partial z_{j} \partial \overline{z_{k}}}(z) \, \, \text{for each} \,  1 \leq k,\,j\,\leq n.$$ Similarly, since line bundles $L_{p}$ are positive, by the definition of positivity, $[\alpha_{kj}(z)]_{kj}=\big[\frac{\partial^{2} \varphi_{p}}{\partial z_{j} \partial \overline{z_{k}}}(z)\big]_{kj}$  is a positive definite Hermitian matrix , where $\varphi_{p}$ is the corresponding local weight function for $h_{p}$.

 By using the diophantine approximation (\ref{cur}) and the positivity of the above two forms, one can find some large enough $p_{0} \in \mathbb{N}$ such that

    \begin{equation}\label{diops}
    \frac{3 \omega}{4} \leq \frac{1}{A_{p}}c_{1}(L_{p}, h_{p}) \leq \frac{5\omega}{4}
    \end{equation} for $p \geq p_{0}$. This will be useful in the proofs of Theorem \ref{bkad} and Theorem \ref{udeb}.

We also observe that, at the point $x \equiv 0$ where we have the K\"{a}hler coordinates by (\ref{kahcor}),  we have  \begin{equation*}\label{kahloc}
                                                                                      \omega_{x}= i \sum_{j=1}^{n}{\frac{1}{2} dz_{j} \wedge d\overline{z_{j}}}.
                                                                                    \end{equation*}Also, by using the local representation of $c_{1}(L_{p}, h_{p})$ and Lemma \ref{refe},  \begin{equation}\label{locchern}c_{1}(L_{p}, h_{p})_{x}=dd^{c} \varphi_{p}(0)=i \sum_{j=1}^{n}{\frac{\lambda^{p}_{j}}{\pi} dz_{j} \wedge d\overline{z_{j}}}.\end{equation}

Diophantine approximation (\ref{appr}) implies \begin{equation} \label{limitl}
\lim_{p\rightarrow \infty}{\frac{\lambda^{p}_{j}}{\pi A_{p}}}= \frac{1}{2}\,\,\,\text{for}\,\,j=1, 2, \ldots, n,
\end{equation}which in turn gives \begin{equation}\label{limitl1}
                            \lim_{p\rightarrow \infty}{\frac{\lambda^{p}_{1} \ldots \lambda^{p}_{n}}{A_{p}^{n}}}= (\frac{\pi}{2})^{n}.
                          \end{equation}

In order to measure the distance between any two points $x, y$ on the compact K\"{a}hler manifold $(X, \omega)$, we use the Riemannian distance, which is defined as follows: As is well-known, the K\"{a}hler form $\omega$ and the complex structure $J$ on $X$ compatible with $\omega$ determine a Riemannian metric $g$ on $X$ by $g(u, v):= \omega(u, Jv)$ for all $u, v \in TX$. Given a piecewise smooth curve $\gamma: [a, b] \rightarrow X$ with $\gamma(a)=x$ and $\gamma(b)=y$, the length $L(\gamma)$ of the curve $\gamma$ is given by \begin{equation*}\label{leng}
                                                                                            L(\gamma)=\int_{a}^{b}{\sqrt{g_{\gamma(t)}(\dot{\gamma}(t), \dot{\gamma}(t))} dt}
                                                                                          \end{equation*} and the Riemannian distance $d$ is defined by
\begin{equation*}
d(x, y)=\inf{\{L(\gamma): \gamma(a)=x,\,\,\gamma(b)=y\}}.
\end{equation*}

\section{Demailly's $L^{2}$-estimations for $\overline{\partial}$ operator}
In order to prove both the upper decay estimate of the Bergman kernel and the first-order asymptotics of the Bergman kernel function in our current diophantine setting, we follow the approaches in \cite{CM}, \cite{CMM} and \cite{BCM} to provide first certain $L^{2}$ estimations for solutions of the $\overline{\partial}$-equation, and then derive a weighted estimate for these solutions.

\begin{thm}(\cite{Dem82}, Th\'{e}or\`{e}me 5.1)\label{Demailly82} Let $(X, \omega)$ be a K\"{a}hler manifold with $\dim_{\mathbb{C}}{X}=n$ having a complete K\"{a}hler metric. Let $(L, h)$ be a singular Hermitian holomorphic line bundles such that $c_{1}(L, h) \geq 0$. Then for any form $g\in L^2_{n, 1}(X, L, loc)$ verifying \begin{equation}\label{esti1}
                                                                                                                          \overline{\partial}g=0,\,\,\,\int_{X}{|g|^{2}_{h} \frac{\omega^{n}}{n!}} < \infty,
                                                                                                                        \end{equation}there is $u\in L^{2}_{n,0}(X, L, loc)$ with $\overline{\partial}u=g$ such that \begin{equation}\label{esti2}
                                                                                                                          \int_{X}{|u|^{2}_{h} \frac{\omega^{n}}{n!}} \leq  \int_{X}{|g|^{2}_{h} \frac{\omega^{n}}{n!}}.
                                                                                                                        \end{equation}\end{thm}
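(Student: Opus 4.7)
The proof plan is the classical Hörmander-Andreotti-Vesentini $L^2$-method, adapted to singular Hermitian line bundles in the spirit of Demailly's work. I would first settle the case where $h$ is a smooth metric with $c_1(L,h)\geq 0$, and then reduce the general singular case to it by regularization.

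In the smooth case, the engine is the Bochner-Kodaira-Nakano identity on the complete Kähler manifold $(X,\omega)$: for any smooth, compactly supported $L$-valued $(n,1)$-form $v$,
\[
\|\overline{\partial} v\|_h^2 + \|\overline{\partial}^* v\|_h^2 \;\geq\; \int_X \langle [ic_1(L,h),\Lambda]\,v, v\rangle_h \,\frac{\omega^n}{n!},
\]
where $\Lambda$ is the adjoint of multiplication by $\omega$. A pointwise linear-algebra computation in an orthonormal frame diagonalizing $c_1(L,h)$ shows that for $(n,1)$-forms the right-hand side is nonnegative and yields the desired a priori control of $v$. The completeness of $\omega$ then lets me invoke the Andreotti-Vesentini density lemma, realized by a sequence of cutoffs $\chi_\nu$ exhausting $X$ with $\|d\chi_\nu\|_\infty\to 0$, to extend the a priori inequality from smooth compactly supported forms to the full domains of $\overline{\partial}$ and $\overline{\partial}^*$ in the $L^2$-graph norm.

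Given $g$ with $\overline{\partial} g=0$ and finite $L^2$-norm, the solution $u$ is then produced by a Hahn-Banach/Riesz argument. Using the orthogonal Hodge-type decomposition of an arbitrary test form $\alpha$ and the hypothesis $\overline{\partial} g=0$, one verifies that the functional $T\colon \overline{\partial}^*\alpha\mapsto \langle g,\alpha\rangle_h$ is well-defined and continuous on $\mathrm{Im}(\overline{\partial}^*)\subset L^2_{n,0}(X,L)$, with operator norm controlled by $\|g\|_h$ thanks to the a priori estimate. Hahn-Banach extends $T$ to all of $L^2_{n,0}(X,L)$ preserving its norm, and the Riesz representation theorem produces the desired $u\in L^2_{n,0}(X,L)$ satisfying $\overline{\partial} u=g$ together with the claimed norm bound.

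For a genuinely singular $h$, I would approximate it by a decreasing sequence of smooth Hermitian metrics $h_\nu\searrow h$ with $c_1(L,h_\nu)\geq -\epsilon_\nu\omega$ and $\epsilon_\nu\to 0$, solve the equation with respect to each $h_\nu$ via the smooth case, and extract a weakly convergent subsequence of the $u_\nu$ in $L^2$. The main obstacle I anticipate lies precisely in this limiting step: one must keep the a priori constants uniform in $\nu$ despite the small negative curvature defect $-\epsilon_\nu\omega$ (either by an auxiliary twist with a small positive bundle or by a careful semipositive argument directly), and one must also verify that the weak limit $u$ satisfies the norm inequality measured against $h$ itself rather than against its regularizations. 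Lower semicontinuity of the $L^2$-norm under weak convergence, combined with monotone convergence as $h_\nu\searrow h$, is what ultimately closes the argument.
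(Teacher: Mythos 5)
This statement is not proved in the paper at all: it is quoted verbatim (with attribution) as Th\'eor\`eme~5.1 of Demailly's 1982 paper, so there is no ``paper's own proof'' for your proposal to be compared against. Your outline does reproduce the main lines of Demailly's actual argument -- Bochner--Kodaira--Nakano inequality, Andreotti--Vesentini density via cutoffs afforded by a complete metric, the Hahn--Banach/Riesz functional-analytic construction of $u$, and regularization to pass from smooth to singular metrics -- so as a high-level sketch it is on the right track and faithful to the standard proof.

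Two points deserve more care than you give them. First, the hypothesis is only that $X$ \emph{admits} a complete K\"ahler metric; the K\"ahler form $\omega$ appearing in the $L^2$-norms need not itself be complete. Demailly handles this by running the estimate with the family $\omega_\varepsilon = \omega + \varepsilon\hat\omega$ (where $\hat\omega$ is a fixed complete K\"ahler metric), obtaining solutions $u_\varepsilon$ with bounds uniform in $\varepsilon$, and then letting $\varepsilon\to 0$; your phrase ``on the complete K\"ahler manifold $(X,\omega)$'' elides this step, which is an essential part of the argument. Second, your sketch of the a~priori estimate does not account for where the explicit constant $1$ in $\int_X |u|^2 \le \int_X |g|^2$ would come from. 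With only $c_1(L,h)\ge 0$ the curvature operator $A_1=[i\Theta(L,h),\Lambda]$ acting on $(n,1)$-forms is merely nonnegative, and the Cauchy--Schwarz step yields $\int|u|^2 \le \int\langle A_1^{-1}g,g\rangle$, which is not bounded by $\int|g|^2$ unless $A_1\ge \mathrm{Id}$; Demailly's Th\'eor\`eme~5.1 in fact states the conclusion in terms of $\langle A_q^{-1}g,g\rangle$ (with the implicit hypothesis that this quantity be finite), and the constant-$1$ reformulation given in the paper only holds under an additional lower bound on the curvature, which the authors do have in the application (Theorem~\ref{De3}, where $c_1(F_p,g_p)\ge \tfrac{A_p}{2}\omega$) but which is not part of the hypothesis as stated. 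If you want a complete proof of the statement as literally written, this gap must be addressed; if you instead aim to prove the genuine Demailly theorem, you should state the conclusion with $\langle A^{-1}g,g\rangle$ in place of $|g|^2$.
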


\begin{thm}\label{De3}
Let $X$ be a complete K\"{a}hler manifold with $\dim_{\mathbb{C}}{X}=n$ and let $\omega$ be a K\"{a}hler form (not necessarily complete) on $X$ such that its Ricci form $\text{Ric}_{\omega} \geq -2\pi T_{0} \omega$ on $X$ for some constant  $T_{0} >0$. Let $(L_{p}, h_{p})$ be a sequence of holomorphic line bundles on $X$ with Hermitian metrics $h_{p}$ of class $\mathscr{C}^{3}$ such that (\ref{appr}) holds and there is a $p_{0}\in \mathbb{N}$ such that $A_{p} \geq 4T_{0}$ for all $p>p_{0}$. If $p>p_{0}$ and $f\in L^{2}_{0, 1}(X, L_{p}, loc)$ satisfies $\overline{\partial}f=0$ and $\int_{X}{|f|^{2}_{h_{p}}\,\frac{\omega^{n}}{n!}}< \infty$, then there exists $u\in L^{2}_{0, 0}(X, L_{p}, loc)$ such that $\overline{\partial}u=f$ and $\int_{X}{|u|^{2}_{h_{p}}\,\frac{\omega^{n}}{n!}}\leq \frac{2}{A_{p}}\int_{X}{|f|^{2}_{h_{p}}\,\frac{\omega^{n}}{n!}}$.
\end{thm}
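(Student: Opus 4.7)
The strategy is a standard twist-and-apply argument: convert the $(0,1)$-form problem for $L_p$ into an $(n,1)$-form problem for $L_p \otimes K_X^*$ so that Theorem \ref{Demailly82} is available, use the diophantine bound (\ref{diops}) to absorb the Ricci term into the curvature, and read off the factor $2/A_p$ from the resulting strict positivity.

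I would begin by using the canonical isomorphism $L^2_{0,q}(X, L_p) \cong L^2_{n,q}(X, L_p \otimes K_X^*)$ induced by wedging with a local frame of $K_X$, where $K_X^*$ carries the metric $h_{K_X^*}$ induced from $\omega$. This isomorphism intertwines $\overline{\partial}$ and preserves pointwise norms, so the given $\overline{\partial}$-closed $f \in L^2_{0,1}(X, L_p)$ corresponds to a $\overline{\partial}$-closed $\tilde{f} \in L^2_{n,1}(X, L_p \otimes K_X^*)$ of the same $L^2$ norm.

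Next I would compute the twisted curvature. Since $c_1(K_X^*, h_{K_X^*}) = \frac{1}{2\pi}\mathrm{Ric}_\omega$, the hypothesis $\mathrm{Ric}_\omega \geq -2\pi T_0\,\omega$ gives $c_1(K_X^*, h_{K_X^*}) \geq -T_0\,\omega$. Combining this with (\ref{diops}), which for $p > p_0$ provides $c_1(L_p, h_p) \geq \tfrac{3A_p}{4}\omega$, and invoking $A_p \geq 4T_0$, we obtain
\begin{equation*}
c_1\bigl(L_p \otimes K_X^*,\, h_p \otimes h_{K_X^*}\bigr) \;\geq\; \Bigl(\tfrac{3A_p}{4} - T_0\Bigr)\omega \;\geq\; \tfrac{A_p}{2}\,\omega.
\end{equation*}

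Finally I would apply Theorem \ref{Demailly82} to $\tilde{L} := L_p \otimes K_X^*$ with metric $\tilde{h} := h_p \otimes h_{K_X^*}$ and right-hand side $\tilde{f}$, then transport the solution back through the isomorphism of the first step to obtain $u \in L^2(X, L_p)$. The sole technical obstacle is the scaling constant: the statement of Theorem \ref{Demailly82} as quoted yields only the factor $1$, whereas we need $2/A_p$. This is overcome by appealing to the sharper form of Demailly's estimate from \cite{Dem82}, which bounds $\|u\|^2$ by $\int_X \langle A_\omega^{-1} g, g\rangle\,\tfrac{\omega^n}{n!}$ with $A_\omega = [c_1(\tilde{L}, \tilde{h}), \Lambda_\omega]$; on $(n,1)$-forms the operator $A_\omega$ has smallest eigenvalue $\geq \tfrac{A_p}{2}$ by the curvature bound above, producing $\|\tilde u\|^2 \leq \tfrac{2}{A_p}\|\tilde f\|^2$ and hence the claimed estimate for $u$.
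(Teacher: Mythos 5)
Your proposal is correct and takes the same underlying route as the paper: twist $L_{p}$ by the canonical bundle via the isometry $\Psi$ sending $(0,1)$-forms valued in $L_{p}$ to $(n,1)$-forms valued in $F_{p}=L_{p}\otimes K_{X}^{-1}$, combine $\mathrm{Ric}_{\omega}\geq -2\pi T_{0}\omega$, the diophantine bound (\ref{diops}) and $A_{p}\geq 4T_{0}$ to get $c_{1}(F_{p},g_{p})\geq \tfrac{A_{p}}{2}\omega$, and then invoke Demailly's $L^{2}$-estimate. Where you diverge is in how the factor $2/A_{p}$ is produced, and your treatment is actually the more transparent one. You appeal explicitly to the sharper form of Th\'eor\`eme 5.1, namely $\|u\|^{2}\leq \int_{X}\langle A_{\omega}^{-1}g,g\rangle\,\tfrac{\omega^{n}}{n!}$ with $A_{\omega}=[c_{1}(F_{p},g_{p}),\Lambda_{\omega}]$, and observe that the smallest eigenvalue of $A_{\omega}$ on $(n,1)$-forms is at least $A_{p}/2$. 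The paper instead rescales the data, setting $f'=\tfrac{\sqrt{2}}{\sqrt{A_{p}}}f$ and feeding $\Psi(f')$ into the factor-one estimate as quoted in Theorem \ref{Demailly82}. But rescaling $f$ by a constant commutes with solving $\overline{\partial}$, so by itself it produces no gain: the resulting $u=\Psi^{-1}\tilde{f}$ solves $\overline{\partial}u=f'$ rather than $\overline{\partial}u=f$, and dividing back by $\tfrac{\sqrt{2}}{\sqrt{A_{p}}}$ to recover a solution of the original equation exactly cancels the $2/A_{p}$, leaving only $\|u\|^{2}\leq\|f\|^{2}$. So the paper's proof is implicitly resting on the same $A_{\omega}^{-1}$ form of the estimate that you state openly; your version supplies the missing justification for the constant.
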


\begin{proof}
 By the diophantine approximation relation (\ref{appr}), fix some $p_{0} \in \mathbb{N}$ so that the assertions in the theorem are satisfied and also for all $p>p_{0}$, $\frac{3 \omega}{2} \geq \frac{1}{A_{p}}c_{1}(L_{p}, h_{p}) \geq \frac{3 \omega}{4}$. Let $L_{p}=F_{p}\otimes K_{X}$, where $F_{p}=L_{p} \otimes K^{-1}_{X}$. The canonical line bundle $K_{X}$ is endowed with the metric $h^{K_{X}}$ induced by $\omega$. If $g_{p}=h_{p} \otimes h^{K^{-1}_{X}}$ is the induced metric on $F_{p}$, then, since $c_{1}(K_{X}, h_{K_{X}})=-\frac{1}{2 \pi}\text{Ric}_{\omega}$ and $A_{p} \geq 4T_{0}$ for all $p>p_{0}$, \begin{equation}\label{12}
                                                                                             \frac{1}{A_{p}}c_{1}(F_{p}, g_{p})=\frac{1}{A_{p}}c_{1}(L_{p}, h_{p})- \frac{1}{A_{p}}c_{1}(K_{X}, h^{K_{X}})=\frac{1}{A_{p}}c_{1}(L_{p}, h_{p})+ \frac{1}{2\pi A_{p}} Ric_{\omega} \geq \frac{3\omega}{4}-\frac{\omega}{4}=\frac{\omega}{2} \geq 0
                                                                                           \end{equation} for all $p>p_{0}$. On the other hand, there exists a natural isometry, $$\Psi= \sim: \Lambda^{0, q}(T^{*}(X)) \otimes L_{p} \rightarrow \Lambda^{n, q}(T^{*}(X)) \otimes F_{p}$$ by  \begin{equation}\Psi(s)= \tilde{s}=(w^{1}\wedge \ldots \wedge w^{n} \wedge s) \otimes (w_{1} \wedge \ldots \wedge w_{n}),\end{equation} where ${w_{1}, \ldots w_{n}}$ is a local orthonormal frame of $T^{(1, 0)}(X)$ and $\{w^{1}, \ldots, w^{n}\}$ is the dual frame. This operator $\Psi$ commutes with the action of $\overline{\partial}$. Now for a form $f \in L^{2}_{0, 1}(X, L_{p}, loc)$ satisfying $\overline{\partial}f=0$ and $\int_{X}{|f|^{2}_{h_{p}} \frac{\omega^{n}}{n!}} < \infty$, consider $f'=\frac{\sqrt{2}}{\sqrt{A_{p}}} f$. Obviously we have $\overline{\partial} f'=0$ and $\int_{X}{|f'|^{2}_{h_{p}} \frac{\omega^{n}}{n!}} < \infty$. By using the isometry $\Psi$, we can find $\Psi(f')=F' \in L^{2}_{n, 1}(X, F_{p}, loc)$ with $\overline{\partial}F'=\overline{\partial}\Psi(f')=\Psi \overline{\partial}f'=0$ and $\int_{X}{|F'|^{2}_{g_{p}} \frac{\omega^{n}}{n!}} < \infty$ since isometries preserve the $L^{2}$-norm. By Theorem \ref{Demailly82}, there exists $\tilde{f} \in L^{2}_{n, 0}(X, F_{p}, loc)$ such that $\overline{\partial}\tilde{f}=F'$ and $\int_{X}{|\tilde{f}|^{2}_{g_{p}}\frac{\omega^{n}}{n!}} \leq \int_{X}{|F'|^{2}_{g_{p}}\frac{\omega^{n}}{n!}}$. Taking $u:=\Psi^{-1}\tilde{f}$ and $f'=\Psi^{-1}(F')$ finishes the proof since $\Psi^{-1}$ is an isometry as well.
\end{proof}

\begin{thm}\label{De4}
Let $(X, \omega)$ be a compact K\"{a}hler manifold, $\dim_{\mathbb{C}}{X}=n$ and let $\{(L_{p}, h_{p})\}_{p\geq 1}$ be a sequence of holomorphic line bundles on $X$ with $\mathscr{C}^{3}$ Hermitian metrics as before such that the diophantine approximation condition (\ref{appr}) holds. Then there exists $p_{0}\in \mathbb{N}$ such that if $u_{p}$ are real-valued functions of class $\mathscr{C}^{2}$ on $X$ such that \begin{equation}\label{L22}
                                     \|\overline{\partial}u_{p}\|_{L^{\infty}(X)}\leq \frac{\sqrt{A_{p}}}{8},\,\,dd^{c} u_{p} \geq -\frac{A_{p}}{4} \omega,
                                   \end{equation}then \begin{equation}\label{L33}
                                                        \int_{X}{|v|^{2}_{h_{p}}e^{2u_{p}}\frac{\omega^{n}}{n!}} \leq \frac{16}{3A_{p}}\int_{X}{|\overline{\partial}v|^{2}_{h_{p}}e^{2u_{p}}\frac{\omega^{n}}{n!}}
                                                      \end{equation} holds for $p>p_{0}$ and for every $\mathscr{C}^{1}$-smooth section $v$ of $L_{p}$ which is orthogonal to $H^{0}(X, L_{p})$ with respect to the inner product induced by $h_{p}$ and $\omega^{n}$.
\end{thm}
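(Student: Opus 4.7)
The plan is to apply Theorem \ref{De3} to a twisted section relative to a modified Hermitian metric, exploiting a cancellation between the two twists to preserve the orthogonality hypothesis.

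I would introduce $\tilde h_p := h_p e^{-2u_p}$, whose local weight $\varphi_p + u_p$ yields
\[ c_1(L_p, \tilde h_p) = c_1(L_p, h_p) + dd^c u_p \geq \tfrac{3A_p}{4}\omega - \tfrac{A_p}{4}\omega = \tfrac{A_p}{2}\omega \]
for $p \geq p_0$, by combining (\ref{diops}) with the hypothesis $dd^c u_p \geq -\tfrac{A_p}{4}\omega$. This positive curvature is exactly what is needed to feed $\tilde h_p$ into the machinery of Theorem \ref{De3}. Setting $\tilde v := v e^{2u_p}$, the two exponential factors cancel so that $\langle \tilde v, S \rangle_{\tilde h_p} = \langle v, S\rangle_{h_p} = 0$ for every $S \in H^0(X, L_p)$, while $\|\tilde v\|^2_{\tilde h_p} = \|v\|^2_{h_p e^{2u_p}}$. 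Hence $\tilde v$ is the $\tilde h_p$-minimal solution of $\overline{\partial}w = \overline{\partial}\tilde v$, and the version of Theorem \ref{De3} adapted to $\tilde h_p$ (with $p_0$ taken large enough to absorb the Ricci correction arising from the $K_X$-twist into the sharp constant $2/A_p$) delivers
\[ \|v\|^2_{h_p e^{2u_p}} = \|\tilde v\|^2_{\tilde h_p} \leq \tfrac{2}{A_p}\|\overline{\partial}\tilde v\|^2_{\tilde h_p}. \]

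To convert this back to the target norm, I would use the Leibniz rule $\overline{\partial}\tilde v = e^{2u_p}(\overline{\partial}v + 2v\,\overline{\partial}u_p)$ together with $(a+b)^2 \leq 2a^2 + 2b^2$ and the hypothesis $\|\overline{\partial}u_p\|^2_\infty \leq A_p/64$ to obtain
\[ |\overline{\partial}v + 2v\,\overline{\partial}u_p|^2_{h_p} \leq 2|\overline{\partial}v|^2_{h_p} + 8|v|^2_{h_p}\|\overline{\partial}u_p\|^2_\infty \leq 2|\overline{\partial}v|^2_{h_p} + \tfrac{A_p}{8}|v|^2_{h_p}, \]
which integrates to $\|\overline{\partial}\tilde v\|^2_{\tilde h_p} \leq 2\|\overline{\partial}v\|^2_{h_p e^{2u_p}} + \tfrac{A_p}{8}\|v\|^2_{h_p e^{2u_p}}$. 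Substituting and moving the $\|v\|^2$ term to the left yields $\tfrac{3}{4}\|v\|^2_{h_p e^{2u_p}} \leq \tfrac{4}{A_p}\|\overline{\partial}v\|^2_{h_p e^{2u_p}}$, equivalent to the asserted constant $\tfrac{16}{3A_p}$.

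The main technical point is justifying the sharp coefficient $2/A_p$ in the $L^2$-estimate for $\tilde h_p$: the naive passage to $F_p = L_p \otimes K_X^{-1}$ only provides curvature $(A_p/2 - T_0)\omega$ on the auxiliary bundle and a priori the weaker constant $4/A_p$. Recovering $2/A_p$, and hence the precise coefficient $16/(3A_p)$, forces $p_0$ to be chosen large enough that $T_0/A_p$ is dominated; at that scale the Cauchy--Schwarz parameter $\lambda = 1$ is optimal and closes the inequality with the stated constant.
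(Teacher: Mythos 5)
Your proposal follows essentially the same route as the paper: twist the metric to $g_p=e^{-2u_p}h_p$ so that $c_1(L_p,g_p)\geq\frac{A_p}{2}\omega$, observe that $e^{2u_p}v$ remains $g_p$-orthogonal to $H^0(X,L_p)$, invoke the $\overline{\partial}$-estimate of Theorem \ref{De3} for the modified metric, and absorb the $\frac{A_p}{8}|v|^2_{h_p}$ term produced by the Leibniz rule into the left-hand side to reach $\frac{16}{3A_p}$. Your closing caveat is a fair one and is also implicit in the paper: Theorem \ref{De3} is invoked for $(L_p,g_p)$ even though $g_p$ does not satisfy the diophantine hypothesis, and with only $c_1(L_p,g_p)\geq\frac{A_p}{2}\omega$ and $A_p\geq4T_0$ the $K_X$-twist yields curvature $\geq\frac{A_p}{4}\omega$ and hence the weaker Demailly constant $\frac{4}{A_p}$ rather than $\frac{2}{A_p}$, so the exact coefficient $\frac{16}{3A_p}$ is slightly optimistic (one cleanly gets a constant of the form $\frac{C}{A_p}$, which is all that is used later), and taking $p_0$ large reduces but does not eliminate the Ricci loss.
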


\begin{proof}
As in the proof of Theorem \ref{De3}, via the diophantine convergence assumption (\ref{appr}), we first fix some $p_{0}\in \mathbb{N}$ so that for any (fixed) $p>p_{0}$, one can get $\frac{3\omega}{4} \leq \frac{1}{A_{p}}c_{1}(L_{p}, h_{p}) \leq \frac{3\omega}{2}$ and $A_{p} \geq 4T_{0}$.  The main idea is to use Theorem \ref{De3}. To this end, let us fix a constant $T_{0}>0$ so that $\text{Ric}_{\omega} \geq -2\pi T_{0} \omega$ on $X$. Using the real-valued functions $u_{p}$ given in the assumptions of theorem, we consider the metrics $g_{p}:= e^{-2u_{p}}h_{p}$ on $L_{p}$. From (2.3.5) in \cite{MM1}(p. 98) and the second relation in (\ref{L22}) yield the following
\begin{equation*}
 c_{1}(L_{p}, g_{p})= c_{1}(L_{p}, h_{p})+ dd^{c} u_{p} \geq \frac{3 A_{p} \omega}{4}- \frac{A_{p} \omega}{4}=\frac{A_{p} \omega}{2}.
                                                                                                                           \end{equation*}
If we define an inner product by using $g_{p}$ in $L^{2}(X, L_{p})$ as $( s_{1}, s_{2} )_{g_{p}}:= \int_{X}{\langle s_{1}, s_{2}\rangle_{g_{p}} \frac{\omega^{n}}{n!}}$, we see, by the relation $g_{p}= e^{-2u_{p}}h_{p}$, for every $s\in H^{0}(X, L_{p})$,   \begin{equation*}
               ( e^{u_{p}}v, s )_{g_{p}}=\int_{X}{\langle e^{2u_{p}}v, s \rangle_{g_{p}}\frac{\omega^{n}}{n!}}=\int_{X}{\langle v, s \rangle_{h_{p}} \frac{\omega^{n}}{n!}}=0
             \end{equation*} for every $\mathscr{C}^{1}$-smooth section $v$ of $L_{p}$.

Write \begin{equation} \label{beta}\beta= \overline{\partial}(e^{2u_{p}}v)=e^{2u_{p}}(2 \overline{\partial}u_{p} \wedge v + \overline{\partial}v). \end{equation}

Clearly $\overline{\partial} \beta=0$. By assumptions on $u_{p}$ and $v$, it follows immediately that $\beta \in  L^{2}_{0, 1}(X, L_{p}, loc)$, so by Theorem \ref{De3}, there exists $\tilde{v}\in L^{2}_{0, 0}(X, L_{p}, loc)$ such that $\overline{\partial} \tilde{v}= \beta$ and \begin{equation}\label{ineq}
                                                                                   \int_{X}{|\tilde{v}|^{2}_{g_{p}} \frac{\omega^{n}}{n!}} \leq \frac{2}{A_{p}}\int_{X}{|\beta|^{2}_{g_{p}}\frac{\omega^{n}}{n!}}.
                                                                                 \end{equation}Since $e^{2u_{p}}v$ is orthogonal to $H^{0}(X, L_{p})$ for every $v\in \mathscr{C}^{1}(X, L_{p})$, by writing $\tilde{v}=e^{2u_{p}}v+ s$ for some $s \in H^{0}(X, L_{p})$, one can see
\begin{equation}\label{ineq1}
 (e^{2u_{p}}v+s, e^{2u_{p}}v+s)_{g_{p}}=\int_{X}{|\tilde{v}|^{2}_{g_{p}}\frac{\omega^{n}}{n!}} =\int_{X}{(|e^{2u_{p}}v|^{2}_{g_{p}}+ |s|^{2}_{g_{p}})\frac{\omega^{n}}{n!}} \geq \int_{X}{|e^{2u_{p}}v|^{2}_{g_{p}}\frac{\omega^{n}}{n!}}.
 \end{equation}From (\ref{ineq}) and (\ref{ineq1}), we have \begin{equation}\label{ineq2}
                                                             \int_{X}{|e^{2u_{p}}v|^{2}_{g_{p}}\frac{\omega^{n}}{n!}} \leq \int_{X}{|\tilde{v}|^{2}_{g_{p}} \frac{\omega^{n}}{n!}} \leq \frac{2}{A_{p}}\int_{X}{|\beta|^{2}_{g_{p}}\frac{\omega^{n}}{n!}}.
                                                            \end{equation}
Let us now estimate $|\beta|^{2}_{g_{p}}$ from above.  By (\ref{beta}) and the first upper bound in (\ref{L22}), we obtain the following \begin{equation}\label{ineq3}
  |\beta|^{2}_{g_{p}}=e^{2u_{p}}|2 \overline{\partial}u_{p} \wedge v + \overline{\partial}v|^{2}_{h_{p}} \leq 2 e^{2u_{p}}(4|\overline{\partial}u_{p} \wedge v|^{2}_{h_{p}} + |\overline{\partial} v|^{2}_{h_{p}}) \leq 2 e^{2u_{p}}(\frac{A_{p}}{16} |v|^{2}_{h_{p}} +|\overline{\partial}v|^{2}_{h_{p}}),
   \end{equation}where, in the first estimation, we use an elementary inequality for norms: $|x+y|^{2} \leq 2(|x|^{2}+|y^{2}|)$.
 Finally, putting (\ref{ineq3}) into (\ref{ineq2}) finishes the proof.
\end{proof}

\section{Bergman Kernel Estimations}

\begin{thm}\label{bkad}
Let $(X, \omega)$ be a compact K\"{a}hler manifold with $\dim_{\mathbb{C}}{X}=n$. Let $\{(L_{p}, h_{p})\}^{\infty}_{p\geq 1}$ be a sequence of holomorphic line bundles with Hermitian metrics $h_{p}$ of class $\mathscr{C}^{3}$ such that (\ref{appr}) holds. Assume that $\eta_{p}=\frac{\|h_{p}\|^{1/3}_{3}}{\sqrt{A_{p}}} \rightarrow 0$ as $p \rightarrow \infty$. Then we have \begin{equation}\label{foa}
                                                                                                  \lim_{p\rightarrow \infty}\frac{K_{p}(x)}{A^{n}_{p}}=1.
                                                                                                \end{equation}
\end{thm}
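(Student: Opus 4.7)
The plan is to sandwich $K_p(x)$ between $A_p^n(1-o(1))$ and $A_p^n(1+o(1))$ by comparing to the model Bargmann--Fock kernel on $\mathbb{C}^n$. Fix $x\in X$ and apply Lemma~\ref{refe} to obtain K\"ahler coordinates $z=(z_1,\ldots,z_n)$ centered at $x\equiv 0$ on $P^n(x,R)$ with a local holomorphic frame whose weight has the form $\varphi_p(z)=\operatorname{Re}(t_p(z))+\sum_j\lambda_j^p|z_j|^2+\widetilde\varphi_p(z)$, where $|\widetilde\varphi_p(z)|\leq D'\|h_p\|_3|z|^3$. After absorbing the holomorphic polynomial $t_p$ into the frame, I may assume $\varphi_p(0)=0$. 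The key intermediate scale is $r_p:=(\eta_p A_p)^{-1/2}$, chosen so that $A_p r_p^2=1/\eta_p\to\infty$ (yielding super-exponentially small Gaussian tails) while $\|h_p\|_3 r_p^3=\eta_p^{3/2}\to 0$ (making the cubic remainder $\widetilde\varphi_p$ uniformly negligible on $P^n(0,r_p)$).

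For the lower bound, take a cutoff $\chi_p$ equal to $1$ on $|z|\leq r_p/2$ and vanishing off $|z|\leq r_p$ with $|\bar\partial\chi_p|\leq C/r_p$, and set $\sigma_p:=\chi_p e_p$. A Gaussian computation using (\ref{limitl1}), Lemma~\ref{refe}(i), and $\widetilde\varphi_p=o(1)$ on $P^n(0,r_p)$ yields $\|\sigma_p\|_p^2=(1+o(1))\prod_j\pi/(2\lambda_j^p)=(1+o(1))/A_p^n$. Since $\bar\partial\sigma_p$ is supported on the annulus $\{r_p/2\leq|z|\leq r_p\}$ where $\varphi_p\geq (1-o(1))\pi/(8\eta_p)$, the norm $\|\bar\partial\sigma_p\|_p^2$ is super-exponentially small in $1/\eta_p$. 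Applying Theorem~\ref{De4} with $u_p\equiv 0$ to $v_p:=\sigma_p-P_p\sigma_p$ (orthogonal to $H^0(X,L_p)$, and smooth by elliptic regularity) gives $\|v_p\|_p^2\leq (16/(3A_p))\|\bar\partial\sigma_p\|_p^2$. Setting $s_p:=\sigma_p-v_p\in H^0(X,L_p)$, orthogonality produces $\|s_p\|_p^2=\|\sigma_p\|_p^2-\|v_p\|_p^2=(1+o(1))/A_p^n$. Crucially, $v_p e_p^{-1}$ is holomorphic on $\{|z|<r_p/2\}$, so the sub-mean-value inequality on $|z|\leq r_p/4$ gives $|v_p(0)|_{h_p}^2\leq C r_p^{-2n}e^{2\sup_{|z|\leq r_p/4}\varphi_p}\|v_p\|_p^2$; balancing the decay $e^{-\pi/(4\eta_p)}$ of $\|v_p\|_p^2$ against the blow-up $e^{\pi/(16\eta_p)}$ leaves a net factor $e^{-3\pi/(16\eta_p)}\to 0$. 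Thus $|s_p(x)|_{h_p}\geq 1-o(1)$, and the variational principle (\ref{variat}) gives $K_p(x)\geq A_p^n(1-o(1))$.

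For the upper bound, let $s_p^*$ achieve the supremum in (\ref{variat}) with $\|s_p^*\|_p=1$ and write $s_p^*=w_p e_p$ locally with $w_p$ holomorphic. Rescale via $u=\sqrt{A_p}\,z$ and set $W_p(u):=w_p(u/\sqrt{A_p})$; then $\varphi_p(u/\sqrt{A_p})=\sum_j c_j^p|u_j|^2+o(1)$ uniformly on $|u|\leq R_p:=\eta_p^{-1/2}$, where $c_j^p:=\lambda_j^p/A_p\to\pi/2$ by (\ref{limitl}). The change of variables, together with Lemma~\ref{refe}(i), gives
\begin{equation*}
1=\|s_p^*\|_p^2\geq(1-o(1))A_p^{-n}\int_{|u|\leq R_p}|W_p(u)|^2\,e^{-2\sum_j c_j^p|u_j|^2}\,dV(u).
\end{equation*}
Restricting to the inscribed polydisk $P^n(0,R_p/\sqrt n)\subset B_{R_p}$ and using the orthogonality of monomials $\{u^\alpha\}$ under the separable Gaussian measure there, the Taylor expansion $W_p(u)=\sum_\alpha a_\alpha u^\alpha$ yields
\begin{equation*}
\int_{P^n(0,R_p/\sqrt n)}|W_p|^2 e^{-2\sum c_j^p|u_j|^2}\,dV\geq|a_0|^2\prod_{j=1}^n\frac{\pi}{2c_j^p}\bigl(1-e^{-2c_j^p(R_p/\sqrt n)^2}\bigr).
\end{equation*}
Since $|s_p^*(x)|_{h_p}^2=|W_p(0)|^2=|a_0|^2$, $\prod_j(2c_j^p/\pi)\to 1$ and $2c_j^p(R_p/\sqrt n)^2\to\infty$, combining the two inequalities yields $K_p(x)\leq A_p^n(1+o(1))$.

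The hard part is the delicate balance of exponential scales in the lower bound: the decay of $\|v_p\|_p^2$ from Theorem~\ref{De4} (governed by the minimum of $\varphi_p$ on the annulus supporting $\bar\partial\chi_p$) must strictly dominate the exponential blow-up $e^{2\sup\varphi_p}$ incurred by converting an $L^2$ bound on $v_p$ into a pointwise estimate via sub-mean-value on the inner ball. The choice $r_p=(\eta_p A_p)^{-1/2}$ together with the nested radius $r_p/4$ makes the maximum-to-minimum ratio of $\varphi_p$ across these regions approximately $1/16$, and the fact that all eigenvalues $\lambda_j^p/A_p$ cluster near the common limit $\pi/2$ by (\ref{limitl}) prevents the decay constant from being overpowered by the blow-up constant.
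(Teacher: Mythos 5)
Your proof is correct and uses the same skeleton as the paper: sub-averaging of $|f|^2e^{-2\Re t_p}$ over a shrinking polydisk for the upper bound, and a $\bar\partial$-corrected cutoff section via Demailly's $L^2$-estimates for the lower bound. The genuine difference lies in how the correction term is controlled pointwise. You bound $|v_p(0)|_{h_p}^2$ by a crude sub-mean-value inequality over the inner ball $|z|\le r_p/4$, which incurs the factor $e^{2\sup_{|z|\le r_p/4}\varphi_p}\approx e^{\pi/(16\eta_p)}$, and then you beat this against the $e^{-\pi/(4\eta_p)}$ decay of $\|v_p\|_p^2$ coming from the annulus where $\bar\partial\sigma_p$ lives; this is the ``delicate balance'' you flag, and it hinges on the chosen nested radii $r_p/4$, $r_p/2$, $r_p$ making $3\pi/16>0$. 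The paper avoids the balance altogether: it applies its own sub-averaging estimate (\ref{vars}) to $\Gamma$ over the \emph{full} polydisk $P^n(0,R_p)$, where the denominator $F(R_p\sqrt{A_p})^n$ tends to $(\pi/2)^n$ rather than being exponentially small; the $\lambda^p_j$-factors then cancel against those in $\|\Gamma\|_p^2$, and what remains is a polynomial bound $\sim (A_pR_p^2)^{-1}\to 0$, insensitive to the exact radius ratios. Your argument reaches the same conclusion but is more fragile; the paper's reuse of its upper-bound lemma is cleaner and more robust. A secondary difference is the scale: the paper takes $R_p=\eta_p^{-2/3}/\sqrt{A_p}$ (so $\|h_p\|_3R_p^3=\eta_p$), you take $r_p=(\eta_pA_p)^{-1/2}$ (so $\|h_p\|_3r_p^3=\eta_p^{3/2}$); both satisfy the two needed conditions $\|h_p\|_3R_p^3\to 0$ and $A_pR_p^2\to\infty$, and your invocation of Theorem~\ref{De4} with $u_p\equiv 0$ is a valid, if slightly indirect, substitute for the paper's direct use of Theorem~\ref{De3}.
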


\begin{proof}
We begin by taking a reference cover of the K\"{a}hler manifold $X$, as in Definition \ref{refed}. Selecting $x\in X$ and a corresponding  $z$-coordinate system based on Lemma \ref{refe} at $x\in X$. Then

\begin{align}\label{lwe}
\varphi_{p}(z) &= \Re{t_{p}(z)} + \varphi'_{p}(z) + \widetilde{\varphi_{p}}(z), \quad \varphi'_{p}(z) = \sum_{j=1}^{n} \lambda^{p}_{j} |z_{j}|^2,
\end{align} $\varphi_{p}$ is the weight for the Hermitian metric $h_{p}$ on $P^{n}(x, R)$ satisfying the condition (ii) in Lemma \ref{refe} and $t_{p}$ is the polynomial of degree at most $2$. Let $e_{p}$ be a local frame of $L_{p}$ on $U_{j}$ with the norm $|e_{p}|_{h_{p}}= e^{-\varphi_{p}} $. Next, we choose $R_{p}\in (0, R/2)$, which we will determine later.

To estimate the norm of a section $S\in H^{0}(X, L_{p})$ at the point $x \equiv 0$, we consider $S=f\,e_{p}$, where $f$ is a holomorphic function on $P^{n}(x, R)$. Utilizing the sub-averaging property for plurisubharmonic functions, we obtain:

\begin{align}\label{subav}
|S(x)|^2_{h_p} &= |f(0) e^{-t_{p}(0)}|^2= |f(0)|^{2} e^{-2\Re{t_{p}(0)}} \leq \frac{\int_{P^{n}(0,R_p)} {|f|^2 e^{-2\Re{t_{p}}} e^{-2\varphi'_{p}} e^{-2 \widetilde{\varphi_{p}}} \frac{\omega^{n}}{n!}}}{ \int_{P^{n}(0,R_p)} {e^{-2\varphi'_{p}} e^{-2 \widetilde{\varphi_{p}}} \frac{\omega^{n}}{n!} }}.
\end{align}

For the right-hand side of (\ref{subav}), by Lemma \ref{refe}, there exists a constant $D>0$ such that $ -D \|h_{p}\|_{3} |z|^{3} \leq \widetilde{\varphi_{p}}(z) \leq D \|h_{p}\|_{3} |z|^{3}$, and by considering (\ref{innp}) and (\ref{lwe}) we have,

\begin{align*} \label{4.4}
 \frac{\int_{P^{n}(0,R_p)} {|f|^2 e^{-2\Re{t_{p}}} e^{-2\varphi'_{p}} e^{-2 \widetilde{\varphi_{p}}} \frac{\omega^{n}}{n!}}}{ \int_{P^{n}(0,R_p)} {e^{-2\varphi'_{p}} e^{-2 \widetilde{\varphi_{p}}} \frac{\omega^{n}}{n!} }}  & \leq \frac{ \int_{P^{n}(0,R_p)}{|f|^2e^{-2\varphi_{p}} \frac{\omega^{n}}{n!}}}{\int_{P^{n}(0, R_{p})}{e^{-2\varphi'_{p}} e^{-2\widetilde{\varphi_{p}}} \frac{dV}{1+D\,R^{2}_{p}} }} \nonumber\\
                                                                              & \leq \frac{(1 + D\,R^{2}_{p}) e^{2\,D\, ||h_{p}||_{3} R^{3}_{p}} \,\|S\|^{2}_{p}}{\int_{P^{n}(0, R_{p})}{e^{-2 \, \varphi'_{p}} dV }} .
\end{align*}Combining the above inequality with (\ref{subav}) yields

\begin{equation}\label{fest}
  |S(x)|^2_{h_p} = |f(0) e^{-t_{p}(0)}|^2= |f(0)|^{2} e^{-2\Re{t_{p}(0)}} \leq \frac{(1 + D\,R^{2}_{p}) e^{2\,D\, ||h_{p}||_{3} R^{3}_{p}} \,\|S\|^{2}_{p}}{\int_{P^{n}(0, R_{p})}{e^{-2 \, \varphi'_{p}} dV }}.
\end{equation}

Let us now estimate the integral in the denominator of (\ref{fest}). To do this, we consider the Gaussian-type integrals of finite radius, \begin{equation}\label{finga}
             F(\rho):=\int_{|\xi| \leq \rho}{e^{-2|\xi|^{2}}dm(\xi)}=\frac{\pi}{2}(1- e^{-2\rho^{2}}),
           \end{equation}where $dm$ is the Lebesgue measure on $\mathbb{C}$. It is easy to see that $F$ is an increasing function of $\rho$. We also write \begin{equation}\label{Cn}
             F(\infty):=\lim_{\rho \rightarrow \infty}\int_{|\xi| \leq \rho}{e^{-2|\xi|^{2}} dm(\xi)}:= \int_{\mathbb{C}}{e^{-2|\xi|^{2}} dm(\xi)}=\frac{\pi}{2}.
           \end{equation}
Since  \begin{equation}\label{gaun}
         \int_{P^{n}(0, R_{p})}{e^{-2\varphi'_{p}} dV}= \int_{P^{n}(0, R_{p})}{e^{-2 (\lambda^{p}_{1}|z_{1}|^{2} + \ldots + \lambda^{p}_{n}|z_{n}|^{2} )} dV(z)},
       \end{equation} it is enough to treat the integral
       \begin{equation}\label{gaun1}
         \int_{\Delta(0, R_{p})}{e^{-2 \lambda^{p}_{j}|z_{j}|^{2}}} dm(z_{j})
       \end{equation}
       in order to get a lower bound for the integral (\ref{gaun}), where $\Delta(0, R_{p})$ is the unit closed disk in $\mathbb{C}$. By the relation (\ref{diops}), there exists $p_{1} \in \mathbb{N}$ such that, for all $p>p_{1}$, \begin{equation}\label{diop1}
                                                                                                              \frac{3A_{p}}{4} \omega_{x} \leq c_{1}(L_{p}, h_{p})_{x} \leq \frac{5A_{p}}{4} \omega_{x},
                                                                                                            \end{equation}
       which, on account of (\ref{kahloc}) and (\ref{locchern}), leads to
        \begin{equation} \label{dioplambda}
       \frac{3 \pi A_{p}}{8} \leq \lambda^{p}_{j} \leq \frac{5 \pi A_{p}}{8}.
                                                  \end{equation}
 Let us go back to the integral (\ref{gaun}),
 \begin{equation*}
         \int_{\Delta(0, R_{p})=\{|z_{j}| \leq R_{p}\}}{e^{-2 \lambda^{p}_{j}|z_{j}|^{2}}} dm(z_{j}),
       \end{equation*}which, by a change of variable $\big(\sqrt{{\lambda_{j} ^{p}}} z_j= w_j \big)$, equals the following \begin{equation*}
                                                                             \frac{1}{\lambda^{p}_{j}} \int_{\{|w_{j}|\leq R_p \sqrt{\lambda_{j}^p}\}}{e^{-2 |w_{j}|^{2}}} dm(w_{j}).
                                                                           \end{equation*} Now, (\ref{dioplambda}) gives $\sqrt{\frac{\lambda^{p}_{j}}{A_{p}}} > \sqrt{\frac{3\pi}{8}}>1$, which gives $R_{p} \sqrt{A_{p}} \leq R_{p} \sqrt{\lambda^{p}_{j}}$. Combining this with the fact that $F$ is increasing, we get
       \begin{equation}\label{lambdas}
       \int_{\Delta(0, R_{p})=\{|z_{j}| \leq R_{p}\}}{e^{-2 \lambda^{p}_{j}|z_{j}|^{2}}} dm(z_{j})= \frac{1}{\lambda^{p}_{j}} \int_{\big\{|w_{j}| \leq R_{p} \sqrt{\lambda^{p}_{j}}\big\}}{e^{-2 |w_{j}|^{2}}} dm(w_{j}) \geq \frac{F(R_{p} \sqrt{A_{p}})}{\lambda^{p}_{j}}.
       \end{equation} Consequently, from (\ref{gaun}), we have \begin{equation*}\label{lambdapro}
                                                                \int_{P^{n}(0, R_{p})}{e^{-2\varphi'_{p}} dV} \geq \frac{F(R_{p} \sqrt{A_{p}})^{n}}{ \lambda^{p}_{1} \ldots \lambda^{p}_{n}}.
                                                               \end{equation*}Inserting this last inequality in (\ref{fest}) give   \begin{equation}\label{vars}
                                                                                                                               |S(x)|^{2}_{h_{p}} \leq \frac{(1+D R_{p}^{2}) e^{2D \,\|h_{p}\|_{3} \, R^{3}_{p}}}{F(R_{p} \sqrt{A_{p}})^{n}}\lambda^{p}_{1} \ldots \lambda^{p}_{n} \,\|S\|_{p}^{2}.
                                                                                                                             \end{equation} If we take the supremum in (\ref{vars}) for all $S \in H^{0}(X, L_{p})$ with $\|S\|_{p}=1$ and use the variational principle (\ref{variat}) for $K_{p}$, we get \begin{equation}\label{estad}
                                                                                                                               K_{p}(x) \leq \frac{(1+D R_{p}^{2}) e^{2D \,\|h_{p}\|_{3} \, R^{3}_{p}}}{F(R_{p}                                                                                                \sqrt{A_{p}})^{n}}\lambda^{p}_{1} \ldots \lambda^{p}_{n}
                                                                                                                             \end{equation}for any $R_{p} \in (0, \frac{R}{2})$.

       We will now determine a lower bound for $K_{p}$ by employing $L^{2}$ estimations obtained earlier as Theorem \ref{De3}. Let $\kappa: \mathbb{C}^{n} \rightarrow [0, 1]$ be a cut-off function with a compact support in $P^{n}(0, 2)$, $\kappa=1$ on $P^{n}(0, 1)$. By defining $\kappa_{p}(z):=\kappa(\frac{z}{R_{p}})$, we consider $H=\kappa_{p}\, e^{t_{p}} e_{p}$, which is a (smooth) section of $L_{p}$, and $|H(x)|^{2}_{h_{p}}=|\kappa_{p}(x)|^{2}e^{2\Re{t_{p}}(x)}\, e^{-\varphi_{p}(x)}$. We estimate $\|H\|_{p}$ from above as follows:

       \begin{equation}\label{dest1a}
       \|H\|^{2}_{p} \leq \int_{P^{n}(0, 2R_{p})} e^{2\Re{t_{p}(x)}}{e^{-2 \varphi_{p}(x)} \frac{\omega_{x}^{n}}{n!}} = \int_{P^{n}(0, 2R_{p})} e^{-2\varphi_{p}'(x)}e^{-2\widetilde{\varphi}_{p}(x)}\frac{\omega_{x}^{n}}{n!}
       \end{equation} By using Lemma \ref{refe} (i) along with the relations (\ref{Cn}) and (\ref{lambdas}) on the integral at the very right end of the inequality (\ref{dest1a}), we get the following

\begin{equation} \label{H}
 \|H\|^{2}_{p} \leq (1 + 4D R^{2}_{p}) e^{16 D \|h_{p}\|_{3}\,R^{3}_{p}} \,\int_{P^{n}(0, 2R_{p})}{e^{-2\varphi'_{p}} dV}\leq(1 + 4D\, R^{2}_{p}) e^{16\, D\, \|h_{p}\|_{3}\,R^{3}_{p}}\, (\frac{\pi}{2})^{n}\, \frac{1}{\lambda^{p}_{1} \ldots \lambda^{p}_{n}}.
\end{equation}

Let us define $\Phi = \overline{\partial} H$. Noting that $\|\overline{\partial} \kappa_{p}\|^{2} = \|\overline{\partial} \kappa \|^{2} / R_{p}^{2}$, where $\|\overline{\partial} \kappa\|$ is the supremum of $|\overline{\partial} \kappa|$, we deduce the following inequality:

\begin{equation*}
\|\Phi\|_p^2 \leq \int_{P^{n}(0,2R_{p})} {|\overline{\partial}\kappa_{p}|^{2} e^{-2\varphi'_{p}}e^{-2\widetilde{\varphi}_{p}} \frac{\omega^{n}}{n!}} \leq \frac{\|\overline{\partial}\kappa\|^{2}}{R_{p}^{2}} (\frac{\pi}{2})^{n} \frac{(1 + 4D\, R^{2}_{p}) e^{16\, D\, \|h_{p}\|_{3}\,R^{3}_{p}}}{\lambda^{p}_{1} \ldots \lambda^{p}_{n}}.
\end{equation*}

As $A_{p} \rightarrow \infty$, by using Theorem \ref{De3}, there exists $p>p_{0}$ such that, for all $p > p_0$, we can find a smooth section $\Gamma$ of $L_{p}$ as a solution to the $\overline{\partial}$-equation for $\Phi$ such that $\overline{\partial} \Gamma=\Phi= \overline{\partial} H$ and
\begin{equation}\label{Gp} \|\Gamma\|_{p}^{2} \leq \frac{2}{A_{p}} \|\Phi\|_{p}^{2} \leq \frac{2\|\overline{\partial}\kappa\|}{A_{p}\,R_{p}^{2}} (\frac{\pi}{2})^{n} \frac{( 1 + 4D\,R_{p}^2)\,e^{(16D \,\|h_{p}\|_{3} R_{p}^{3})}\,}{\lambda^{p}_{1} \ldots \lambda^{p}_{n}}.
\end{equation}

Given that $H=e_{p}$ is holomorphic on $P^{n}(0, R_{p})$, $\Gamma$ is holomorphic on $P^{n}(0, R_{p})$ as well since $\overline{ \partial }\Gamma= \overline{ \partial }H=0$ on $P^{n}(0, R_{p})$. Applying estimate (\ref{vars}) to $\Gamma$ on $P^{n}(0, R_{p})$ leads us to the following inequality

\begin{align}\label{gs}
  |\Gamma(x)|^{2}_{h_{p}}  & \leq \frac{(1+D\,R^{2}_{p}) e^{2D \|h_{p}\|_{3} R^{3}_{p}}}{F(R_{p} \sqrt{A_{p}})^{n}} \lambda^{p}_{1} \ldots \lambda^{p}_{n} \|G\|^{2}_{p}
 \\
   & \leq \frac{2\|\overline{\partial } \kappa\|^{2}}{A_{p} R^{2}_{p} F(R_{p} \sqrt{A_{p}})^{n}} (\frac{\pi}{2})^{n} \,(1+4\,D\,R^{2}_{p})^{2}\,e^{18\,D\,\|h_{p}\|_{3}\, R^{3}_{p}}.
\end{align}

Now we will construct a new section $\Lambda:=H- \Gamma \in H^{0}(X, L_{p})$. Then, by a basic inequality $|S_{1}-S_{2}|^{2}_{h_{p}} \geq (|S_{1}|_{h_{p}} -|S_{2}|_{h_{p}})^{2}$ for norms applied to $\Lambda$, combined with (\ref{gs}) and the observation $|F(x)|_{h_{p}}=1$, we get \begin{align}\label{ssecp}
                                                                                |\Lambda(x)|^{2}_{h_{p}} & \geq (|H(x)|_{h_{p}}- |\Gamma(x)|_{h_{p}})^{2} \nonumber \\
                                                                                 & \geq \big( 1- (\frac{\pi}{2})^{n/2} \frac{\sqrt{2} \|\overline{\partial} \kappa\| (1+ 4 \,D\,R^{2}_{p})}{R_{p} \sqrt{A_{p}} F(R_{p} \sqrt{A_{p}})^{n/2}} e^{9 \, D\,\|h_{p}\|_{3}\,R^{3}_{p}}   \big)^{2}.
                                                                              \end{align}

On the other hand, by (\ref{H}) and (\ref{Gp}) together with the triangle inequality, we obtain    \begin{equation}\label{asd}
            \|\Lambda\|^{2}_{p} \leq (\|H\|_{p}+ \|\Gamma\|_{p})^{2} \leq (\frac{\pi}{2})^{n} \frac{1}{\lambda^{p}_{1} \ldots \lambda^{p}_{n}} (1+ 4\,D\,R^{2}_{p})\,e^{16\,D\,\|h_{p}\|_{3}\,R^{3}_{p}} \big(1+ \frac{\sqrt{2} \|\overline{\partial} \kappa\|}{R_{p} \sqrt{A_{p}}}\big)^{2}.
          \end{equation} To simplify what we have done so far, we write \begin{equation}\label{B1}
                                    B_{1}(R_{p}):=\big( 1- (\frac{\pi}{2})^{n/2} \frac{\sqrt{2} \|\overline{\partial} \kappa\| (1+ 4 \,D\,R^{2}_{p})}{R_{p} \sqrt{A_{p}} F(R_{p} \sqrt{A_{p}})^{n/2}} e^{9 \, D\,\|h_{p}\|_{3}\,R^{3}_{p}}   \big)^{2}
                                  \end{equation} and \begin{equation}\label{B2}
                                                        B_{2}(R_{p}):=  (1+ 4\,D\,R^{2}_{p})\,e^{16\,D\,\|h_{p}\|_{3}\,R^{3}_{p}} \big(1+ \frac{\sqrt{2} \|\overline{\partial} \kappa\|}{R_{p} \sqrt{A_{p}}}\big)^{2}.
                                                      \end{equation}
The variational property (\ref{variat}) combined with (\ref{ssecp}) and (\ref{asd}) implies

\begin{equation}\label{lbb}
 K_{p}(x) \geq \frac{|\Lambda(x)|^{2}_{h_{p}}}{\|\Lambda\|^{2}_{p}} \geq \frac{\lambda^{p}_{1} \ldots  \lambda^{p}_{n}}{(\frac{\pi}{2})^{n}}. \frac{B_{1}(R_{p})}{B_{2}(R_{p})}.
\end{equation}
For the upper bound (\ref{estad}), as above, we put \begin{equation}\label{B3}
         B_{3}(R_{p}):=(\frac{\pi/2}{F(R_{p} \sqrt{A_{p}})})^{n} (1+ D\,R^{2}_{p})\,e^{2\,D\,\|h_{p}\|_{3}\,R^{3}_{p}}.\,
       \end{equation}Observe that \begin{equation}\label{B3u}
                                    (\frac{\pi}{2})^{n} K_{p}(x) \leq B_{3}(R_{p})\,\lambda^{p}_{1} \ldots \lambda^{p}_{n}.
                                  \end{equation}By our hypothesis $\eta_{p}=\frac{\|h_{p}\|^{1/3}_{3}}{\sqrt{A_{p}}} \rightarrow 0$, and now we determine $R_{p}$ in the following way  $$R_{p}:=\eta_{p}^{1/3} \|h_{p}\|^{-1/3}_{3}=\frac{\eta^{-2/3}_{p}}{\sqrt{A_{p}}},$$ which means $$\eta_{p}=\|h_{p}\|_{3} R^{3}_{p},\,\, \eta^{-2/3}_{p}=R_{p} \sqrt{A_{p}}.$$ Since $\|h_{p}\|_{3} \geq 1$ from Subsection \ref{metricss}, we have $R_{p} \leq \eta^{1/3}_{p}$, and so $R_{p} \rightarrow 0$ when $p \rightarrow \infty$. All in all, based on $R_{p}$, it follows from the quantities $B_{1}(R_{p}), \,B_{2}(R_{p})$ and $B_{3}(R_{p})$ that we find uniform upper and lower bounds for $K_{p}$ depending only on $\eta_{p}$ \begin{equation}\label{uniformp}
                                                                                                    \frac{B_{1}(R_{p})}{B_{2}(R_{p})} \geq 1- D' \eta^{2/3}_{p}\,\, \text{and}\,\,B_{3}(R_{p}) \leq 1+ D' \eta^{2/3}_{p}.
                                                                                                  \end{equation}Here $D' >0$ denotes a constant that merely depends on the reference cover. We finally consider the following inequality that holds for all $p > p_{0}$ \begin{equation}\label{feb}
                                                                                                                  \frac{1}{A^{n}_{p}} (\frac{\pi}{2})^{n}  \frac{\lambda^{p}_{1} \ldots  \lambda^{p}_{n}}{(\frac{\pi}{2})^{n}}. \frac{B_{1}(R_{p})}{B_{2}(R_{p})}   \leq  (\frac{\pi}{2})^{n} \frac{K_{p}(x)}{A^{n}_{p}} \leq \frac{1}{A^{n}_{p}}B_{3}(R_{p}) \lambda^{p}_{1} \ldots \lambda^{p}_{n},
                                                                                                                       \end{equation} which, in light of the findings (\ref{limitl1}), (\ref{lbb}), (\ref{B3u}) and (\ref{uniformp}), finishes the proof.

\end{proof}

Relying on the proof in \cite{BCM}, which incorporates methods from \cite{Brn}, \cite{CMM} and \cite{Li}, we provide a proof for the off-diagonal decay estimate of the Bergman kernels $K_{p}(x, y)$ associated with the corresponding line bundles $(L_{p}, h_{p})$ in the current setting of diophantine approximation.

\begin{thm}\label{udeb}
Let $(X, \omega)$ be a compact K\"{a}hler manifold with $\dim_{\mathbb{C}}{X}=n$. Let $\{(L_{p}, h_{p})\}_{p\geq 1}$ be a sequence of holomorphic line bundles with Hermitian metrics $h_{p}$ of class $\mathscr{C}^{3}$ such that (\ref{appr}) is satisfied. Write $\eta_{p}= \frac{\|h_{p}\|^{1/3}_{3}}{\sqrt{A_{p}}} \rightarrow 0$ when $p\rightarrow \infty$. Then there exist constants $G, B >0$, $p_{0} \geq 1$ such that  for every $x, y \in X$ and $p >p_{0}$, the following estimation holds true \begin{equation}\label{BOD}
                       |K_{p}(x, y)|^{2}_{h_{p}} \leq G e^{-B \sqrt{A_{p}}d(x, y)} A^{2n}_{p}.
                     \end{equation}
\end{thm}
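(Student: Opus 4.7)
\emph{Plan.} The plan is to follow the scheme of Berndtsson \cite{Brn}, adapted to the present diophantine setting along the lines of \cite{CM, BCM}: combine a local pointwise submean inequality for holomorphic sections at the fixed scale $1/\sqrt{A_p}$ (which contributes the prefactor $A_p^n$) with a global weighted $L^2$-bound obtained from Theorem \ref{De4}, using as weight a smoothed multiple of the Riemannian distance to $y$. I first dispose of the trivial range $\sqrt{A_p}\,d(x,y)\leq M$ for a constant $M>0$ to be fixed: Cauchy--Schwarz and Theorem \ref{bkad} give $|K_p(x,y)|^2_{h_p} \leq K_p(x)K_p(y) \leq 4A_p^{2n}$ for $p$ large, which implies \eqref{BOD} by choosing $G\geq 4e^{BM}$. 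So I henceforth assume $\sqrt{A_p}\,d(x,y)\geq M$.

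\emph{Weight construction.} For each $y\in X$, I construct a $\mathscr{C}^2$ function $u_p=u_{p,y}:X\to[0,\infty)$ which is essentially $\beta\sqrt{A_p}$ times a smoothing of $d(\cdot,y)$, cut off to vanish on a disk of radius $\sim 1/\sqrt{A_p}$ around $y$. For $\beta>0$ small (independent of $p$), both conditions \eqref{L22} hold: the bound $\|\overline{\partial} u_p\|_{L^\infty}\leq \beta\sqrt{A_p}\leq \sqrt{A_p}/8$ follows from $|\nabla d(\cdot,y)|\leq 1$ a.e., while the Hessian of a smoothing of the distance is $O(\sqrt{A_p})$ (uniformly in $y\in X$ by compactness), yielding $dd^c u_p \geq -C_0\beta A_p\omega \geq -(A_p/4)\omega$ for $p$ large.

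\emph{Weighted $L^2$-bound.} Fix a unit $u\in L_{p,y}$ and set $S_y:=K_p(\cdot,y)(u)\in H^0(X,L_p)$, so $\|S_y\|_p^2\leq K_p(y)\leq 2A_p^n$ and $|K_p(x,y)|^2_{h_p} = \max_{|u|=1}|S_y(x)|^2_{h_p}$. As in the proof of Theorem \ref{bkad}, construct a smooth section $V_y = c_p\,\kappa_p\,e^{t_p}\,e_p$ supported in $P^n(y,2R_p)$ with $R_p=s_0/\sqrt{A_p}$ for a large fixed $s_0$, and calibrate the scalar $c_p$ (of order $A_p^n$) via the Gaussian reproducing identity at $y$ so that the Bergman projection $K_pV_y$ coincides with $S_y$ up to an $L^2$-error of order $e^{-c s_0^2}\|S_y\|_p$. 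The residual $\Gamma_p := V_y - K_pV_y$ is orthogonal to $H^0(X,L_p)$ and solves $\overline{\partial}\Gamma_p=\overline{\partial}V_y$. Since $u_p \leq 2\beta s_0 = O(1)$ on $P^n(y,2R_p)\supset\mathrm{supp}(\overline{\partial}V_y)$, Theorem \ref{De4} yields
\begin{equation*}
\int_X|\Gamma_p|^2_{h_p}\,e^{2u_p}\frac{\omega^n}{n!} \leq \frac{16\,e^{4\beta s_0}}{3A_p}\,\|\overline{\partial}V_y\|_p^2 = O(A_p^n),
\end{equation*}
using $\|\overline{\partial}V_y\|_p^2 = O(A_p^{n+1}/s_0^2)$ from the computation of $\|\Phi\|_p^2$ in Theorem \ref{bkad}. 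Combined with $\int|V_y|^2 e^{2u_p} = O(\|V_y\|_p^2) = O(A_p^n)$, this gives $\int_X|S_y|^2_{h_p}\,e^{2u_p}\frac{\omega^n}{n!} = O(A_p^n)$.

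\emph{Pointwise conclusion and obstacle.} Apply the pointwise submean estimate \eqref{vars} to the holomorphic section $S_y$ at $x$, on the polydisk $P^n(x,R_p')$ of fixed scale $R_p':=c_0/\sqrt{A_p}$. The weight $u_p$ oscillates on $P^n(x,R_p')$ by at most $\beta c_0 = O(1)$, hence
\begin{equation*}
|S_y(x)|^2_{h_p} \leq C_1 A_p^n\!\!\int_{P^n(x,R_p')}\!\!|S_y|^2_{h_p}\frac{\omega^n}{n!} \leq C_2 A_p^n e^{-2u_p(x)}\!\!\int_X|S_y|^2_{h_p}e^{2u_p}\frac{\omega^n}{n!} \leq C_3 A_p^{2n}e^{-2u_p(x)}.
\end{equation*}
By construction $u_p(x)\geq (\beta/2)\sqrt{A_p}\,d(x,y)$ whenever $\sqrt{A_p}\,d(x,y)\geq M$ (for $M$ large enough), so \eqref{BOD} follows with $B=\beta$ uniformly in the unit $u\in L_{p,y}$, taking the maximum on both sides. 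The principal obstacle is the calibration of $V_y$ so that $K_pV_y$ approximates $S_y$ with a sufficiently small $L^2$-error: this hinges on the Gaussian reproducing identity near $y$ together with the uniform control on $\widetilde{\varphi_p}$ furnished by $\eta_p\to 0$ (as in Theorem \ref{bkad}), and on choosing $s_0$ large enough to tame the finite-radius Gaussian defect before fixing $\beta$ small. A secondary obstacle is ensuring a uniform-in-$y$ Hessian bound for the smoothed distance function, which is guaranteed by the compactness of $X$ and the $\mathscr{C}^\infty$ regularity of $\omega$.
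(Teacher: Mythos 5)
Your scheme is recognizably in the same family as the paper's (localize, use a weighted $L^2$-estimate from Theorem~\ref{De4}, conclude via the submean inequality at scale $1/\sqrt{A_p}$), but you chose a \emph{nonnegative, growing} weight $u_{p,y}\sim\beta\sqrt{A_p}\,d(\cdot,y)$ centered at $y$, and this creates a gap you cannot close. After writing $S_y = K_pV_y + E_y$, Theorem~\ref{De4} gives you control on $\int_X |\Gamma_p|^2 e^{2u_p}$ because $\Gamma_p=V_y-K_pV_y$ is orthogonal to $H^0(X,L_p)$ and $\overline{\partial}\Gamma_p$ is supported where $u_p=O(1)$. But $E_y:=S_y-K_pV_y$ is holomorphic — it lies \emph{in} $H^0(X,L_p)$, not in its orthogonal complement — so Theorem~\ref{De4} does not apply to it, and you have no way to estimate $\int_X|E_y|^2 e^{2u_p}\,\frac{\omega^n}{n!}$. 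A small unweighted norm $\|E_y\|_p\lesssim e^{-cs_0^2}\|S_y\|_p$ is far too weak here: $E_y$ has no a priori spatial localization (pointwise one only has $|E_y(\zeta)|^2\leq\|E_y\|_p^2K_p(\zeta)$), whereas $e^{2u_p}$ is of size $e^{2\beta\sqrt{A_p}\,\mathrm{diam}(X)}$ at points far from $y$, which swamps any polynomial-in-$A_p$ smallness. Taking $s_0\to\infty$ with $p$ does not help either: the constant $e^{4\beta s_0}$ from the oscillation of $u_p$ on $\mathrm{supp}\,V_y$ then blows up, and the Gaussian defect $e^{-cs_0^2}$ can never beat $e^{-\beta\sqrt{A_p}\,\mathrm{diam}(X)}$ for bounded $s_0$.

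The paper sidesteps this entirely by working with a \emph{nonpositive} weight centered at $x$. It reduces, via the submean Claim at $y$ and then the $\beta$-weighted variational identity $\int_X|K_p(x,\zeta)|^2\beta(\zeta)\,\frac{\omega^n}{n!}=\max\{|K_p(\beta S)(x)|^2_{h_p}:\int|S|^2\beta=1\}$, to estimating $|K_p(\beta S)(x)|^2$. The section $u=\beta S-K_p(\beta S)$ \emph{is} orthogonal to $H^0(X,L_p)$, equals $-K_p(\beta S)$ near $x$ (so the Claim applies there), and has $\overline{\partial}u=\overline{\partial}\beta\wedge S$ supported in the collar $U_\delta$. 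The weight $v_p=\epsilon\sqrt{A_p}\,\phi_\delta(d(x,\cdot))$ satisfies $v_p\leq 0$ everywhere, $v_p=0$ on $P^n(x,R_p)$, and $v_p\leq -\epsilon\sqrt{A_p}\,\delta/2$ on $U_\delta$; since $e^{v_p}\leq 1$ there is no blow-up, and Theorem~\ref{De4} applied to $u$ delivers the exponential gain directly from the $\overline{\partial}$-data. If you want to salvage your version, you must either prove a weighted $L^2$-boundedness of the Bergman projection $K_p$ against $e^{2u_p}$ (which is essentially equivalent to what you are trying to prove), or switch to a nonpositive weight so that the orthogonality hypothesis of Theorem~\ref{De4} is honored by the section you actually feed it.
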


\begin{proof}

Initially, we select a reference cover for $X$ in accordance with the earlier definition above and choose a large enough $p_{0} \in \mathbb{N}$ such that \begin{equation*}
                                          R_{p}:=\frac{1}{\sqrt{A_{p}}} < \frac{R}{2}
                                        \end{equation*} and Theorem \ref{De3} and Theorem \ref{De4} are valid for all $p>p_{0}$.

Let $y \in X$ and $r > 0$. Write
\begin{equation*} B(y,r) := \{ x \in X : d(y,x) < r \}, \end{equation*} which is the ball of radius $r>0$ centered at $y$. Choose a constant $\theta \geq 1$ so that for any $ y \in X$,
$$P^n(y, R_p) \subseteq B(y, \theta R_p),$$ where
$P^n(y, R_p)$ is the (closed) polydisk centered at $y$ given by the coordinates centered at $y$ in view of Lemma \ref{refe}.

\underline{\textit{Claim:}} There exists a constant $D' >1$ such that if $y \in X$, so $y\in P^{n}(x_{j}, R_{j})$ for some $j$ and $z$-coordinates centered at $y$ are due to Lemma \ref{refe}, then \begin{equation}\label{subh}
                                                                                     |S(y)|^{2}_{h_{p}} \leq D' \,A_{p}^{n} \int_{P^{n}(y, R_{p})}{|S|_{h_{p}}^{2} \frac{\omega^{n}}{n!}},
                                                                                   \end{equation} where, as above, $P^{n}(y, R_{p})$ is the (closed) polydisk centered at $y\equiv 0$ in the $z$-coordinates and $S$ is an arbitrary continuous section of $L_{p}$ on $X$ which is holomorphic on $P^{n}(y, R_{p})$.

\textit{\underline{Proof of the Claim:}} By Lemma \ref{refe}(ii), $(L_{p}, h_{p})$ has a weight $\varphi_{p}$ on $P^{n}(y, R)$ such that \begin{equation}\label{wevar}
                          \varphi_{p}(z)= \Re{t_{p}(z)} + \varphi'_{p}(z) + \widetilde{\varphi_{p}}(z),
                        \end{equation}where $t_{p}(z)$ is, as in Lemma \ref{refe}, a holomorphic polynomial of degree at most $2$, $\varphi'_{p}(z)= \sum_{l=1}^{n}{\lambda^{p}_{l} |z_{l}|^{2}}$ , and $\widetilde{\varphi_{p}}(z)$ satisfies the inequality \begin{equation}\label{tilde}-D \|h_{p}\|_{3} |z|^{3} \leq \widetilde{\varphi_{p}}(z)  \leq D \|h_{p}\|_{3} |z|^{3}\end{equation} for $z\in P^{n}(y, R)$ (Recall that $R=\min{R_{j}}$). Let $e_{p}$ be a frame of $L_{p}$ on $U_{j}$ so that $S=f\,e_{p}$, where $f$ is a holomorphic function on $P^{n}(y, R_{p})$ and $|e_{p}|_{h_{p}}=e^{-\varphi_{p}}$. As in the beginning of the proof of Theorem \ref{bkad}, we have first the relation, which is nothing but (\ref{fest})

\begin{equation}\label{lest}
  |S(y)|^2_{h_p} = |f(0) e^{-t_{p}(0)}|^2= |f(0)|^{2} e^{-2\Re{t_{p}(0)}} \leq \frac{(1 + D\,R^{2}_{p}) e^{2\,D\, ||h_{p}||_{3} R^{3}_{p}} \,\|S\|^{2}_{p}}{\int_{P^{n}(0, R_{p})}{e^{-2 \, \varphi'_{p}} dV }}.
\end{equation}
Since  \begin{equation}\label{gaunnn}
         \int_{P^{n}(0, R_{p})}{e^{-2\varphi'_{p}} dV}= \int_{P^{n}(0, R_{p})}{e^{-2 (\lambda^{p}_{1}|z_{1}|^{2} + \ldots + \lambda^{p}_{n}|z_{n}|^{2} )} dV(z)}
       \end{equation}as we have done in the proof of Theorem \ref{bkad}, it will be sufficient for us to find a lower bound for the integral
       \begin{equation}\label{gaun1}
         \int_{\Delta(0, R_{p})}{e^{-2 \lambda^{p}_{j}|z_{j}|^{2}}} dm(z_{j})
       \end{equation}
       in order to get a lower bound for the whole integral (\ref{gaunnn}), where $\Delta(0, R_{p})$ is the unit closed disk in $\mathbb{C}$. By the relation (\ref{dioplambda}), there exists $p_{1} \in \mathbb{N}$ such that, for all $p>p_{1}$, $\frac{3 \pi A_{p}}{8} \leq \lambda^{p}_{j} \leq \frac{5 \pi A_{p}}{8},$ and as before, $\sqrt{\frac{\lambda^{p}_{j}}{A_{p}}} > \sqrt{\frac{3\pi}{8}}>1$. We also observe that \begin{equation}\label{FF}
                                                                                            F(1)=\frac{\pi}{2}(1-\frac{1}{e^{2}})> \frac{\pi}{2}\, \frac{21}{25}>1
                                                                                          \end{equation} since $\frac{5}{2} < e < 3$. By the same argument used in the proof of Theorem \ref{bkad}, we get
       \begin{equation*}
       \int_{\Delta(0, R_{p})=\{|z_{j}| \leq \frac{1}{\sqrt{A_{p}}}\}}{e^{-2 \lambda^{p}_{j}|z_{j}|^{2}}} dm(z_{j})= \frac{1}{\lambda^{p}_{j}} \int_{\big\{|w_{j}| \leq \sqrt{\frac{\lambda^{p}_{j}}{A_{p}}}\big\}}{e^{-2 |w_{j}|^{2}}} dm(w_{j}) \geq \frac{1}{\lambda^{p}_{j}},
       \end{equation*} since $F(1)>1$ by (\ref{FF}) and $F$ is increasing.  Consequently, from (\ref{gaunnn}), we have \begin{equation}\label{lambdapro}
                                                                \int_{P^{n}(0, R_{p})}{e^{-2\varphi'_{p}} dV} \geq \frac{1}{ \lambda^{p}_{1} \ldots \lambda^{p}_{n}}.
                                                               \end{equation}Inserting (\ref{lambdapro}) into the sub-mean estimation (\ref{lest}) and using (\ref{dioplambda}), one has
       \begin{align}\label{est2}
         |S(y)|_{h_{p}}^{2} & \leq (1+ D R^{2}_{p}) e^{2D\|h_{p}\|_{3} R^{3}_{p}} \lambda^{p}_{1} \ldots \lambda^{p}_{n} \int_{P^{n}(y, R_{p})}{|S|^{2}_{h_{p}} \frac{\omega^{n}}{n!}}\\
          & \leq (1+ D R^{2}_{p}) e^{2D\|h_{p}\|_{3} R^{3}_{p}} (\frac{5 \pi}{8})^{n} A^{n}_{p} \int_{P^{n}(y, R_{p})}{|S|^{2}_{h_{p}} \frac{\omega^{n}}{n!}}.
       \end{align} As $R_{p}=\frac{1}{\sqrt{A_{p}}}  \rightarrow 0$ and by our assumption that $\eta_{p}=\frac{\|h_{p}\|^{1/3}_{3}}{\sqrt{A_{p}}} \rightarrow 0$ when $p \rightarrow \infty$, one can find a constant $D' >1$ such that, for a large enough $p_{2} \in \mathbb{N}$, \begin{equation*}
                                                                           (1+ D R^{2}_{p}) e^{2D\|h_{p}\|_{3} R^{3}_{p}} (\frac{5 \pi}{8})^{n} \leq D'
                                                                         \end{equation*} for all $p > \max{\{p_{0}, p_{1}, p_{2}\}}$. Hence \begin{equation}\label{est3}
                                                                        |S(y)|_{h_{p}}^{2} \leq D'\, A^{n}_{p} \int_{P^{n}(y, R_{p})}{|S|^{2}_{h_{p}} \frac{\omega^{n}}{n!}},
                                                                         \end{equation} which completes the proof of the claim.
 Let us fix $x\in X$. Then there exists $S_{p}=S_{p, x} \in H^{0}(X, L_{p})$ such that \begin{equation*}
                                                                                         |S_{p}(y)|^{2}_{h_{p}}=|K_{p}(x, y)|^{2}_{h_{p}}                                                                                       \end{equation*} for all $y \in X$.
By Theorem \ref{bkad}, there exists a constant $D'' >1$ and $p_{3} \in \mathbb{N}$ such that, for all $p > p_{3}$, \begin{equation}\label{ble}
                                                                                                                    K_{p}(x) \leq D'' A^{n}_{p},
                                                                                                                  \end{equation} where $D''$ is some constant that depends only on the reference cover.
On the other hand, \begin{equation}\label{normb}
                   \|S_{p}\|^{2}_{p}=\int_{X}{|S_{p}(y)|^{2}_{h_{p}} \frac{\omega^{n}(y)}{n!}}=  \int_{X}{|K_{p}(x, y)|^{2}_{h_{p}} \frac{\omega^{n}(y)}{n!}}=K_{p}(x).
                   \end{equation} In the rest of the proof, we proceed with the near-diagonal and off-diagonal estimations of $K_{p}(x, y)$.

For the near-diagonal estimation, let $y\in X$ and $d(x, y) \leq \frac{4\theta}{\sqrt{A_{p}}}$. By the variational property (\ref{variat}) of $K_{p}(x)$, the inequality (\ref{ble}) and (\ref{normb}), we have
\begin{equation}\label{ve2}
  |K_{p}(x, y)|^{2}_{h_{p}}=|S_{p}(y)|^{2}_{h_{p}}  \leq K_{p}(y) \|S_{p}\|^{2}_{p} \leq K_{p}(x)\,K_{p}(y) \leq (D'')^{2} A^{2n}_{p} \leq e^{4\theta} (D'')^{2} A^{2n}_{p}\, e^{-\sqrt{A_{p}}d(x, y)} .
\end{equation}

We go on with the far off-diagonal estimation. Let $y\in X$, and this time, consider \begin{equation*}
                                                                                   \delta:=d(x, y)> 4\theta\,\frac{1}{\sqrt{A_{p}}}=4 \theta R_{p}.
                                                                                 \end{equation*} By the choice of $S_{p}$ and the claim in the beginning of the proof, we get
\begin{equation}\label{offb}
  |S_{p}(y)|^{2}_{h_{p}}=|K_{p}(x, y)|^{2}_{h_{p}} \leq A^{n}_{p} \int_{P^{n}(y, R_{p})}{|K_{p}(x, \zeta)|^{2}_{h_{p}} \frac{\omega^{n}_{\zeta}}{n!}}.
\end{equation} We observe that the inclusions \begin{equation}\label{inclus} P^{n}(x, R_{p}) \subset B(x, \frac{\delta}{4}) \,\,\text{and}\,\, P^{n}(y, R_{p}) \subset \{\zeta \in X: d(x, \zeta) >\frac{3\delta}{4}\} \end{equation} hold.

Let $\beta$ be a non-negative smooth function on $X$ with the following properties:
\begin{align*}
  \beta(\zeta)=1 & \,\,\text{if}\,\, d(x, \zeta) \geq \frac{3\delta}{4} \\
  \beta(\zeta)=0 & \,\,\text{if}\,\, d(x, \zeta) \leq \frac{\delta}{2} \\
  |\overline{\partial}\beta(\zeta)|^{2} \leq \frac{c}{\delta^{2}} \beta(\zeta) &\,\,\text{for}\,\,\text{some}\,\,c>0.
\end{align*}
According to these data, we first have
\begin{equation}\label{off2}
  \int_{P^{n}(y, R_{p})}{|K_{p}(x, \zeta)|^{2}_{h_{p}} \frac{\omega^{n}_{\zeta}}{n!}} \leq \int_{X}{|K_{p}(x, \zeta)|^{2}_{h_{p}} \beta(\zeta)\frac{\omega^{n}_{\zeta}}{n!}}.
\end{equation}
  Using the variational property for $K_{p}(x)$, the right-hand side of the inequality (\ref{off2}) takes the following form:
  \begin{equation*}
   \max\{|K_{p}(\beta S)|^{2}_{h_{p}}: S\in H^{0}(X, L_{p}), \,\int_{X}{|S|_{h_{p}}^{2}\, \beta \, \frac{\omega^{n}}{n!}=1}\},
  \end{equation*}where \begin{equation*}
                         K_{p}(\beta \,S)(x)=\int_{X}{K_{p}(x, \zeta) (\beta(\zeta)\,S(\zeta)) \frac{\omega^{n}_{\zeta}}{n!}}
                       \end{equation*} is the Bergman projection of the smooth section $\beta S$ to $H^{0}(X, L_{p})$. Note that when $\beta \equiv 1$, the usual variational formula (\ref{variat}) is obtained. Therefore, if we manage to estimate $|K_{p}(\beta S)|^{2}_{h_{p}}$, then we will be done. To this end, to find an upper bound for $|K_{p}(\beta S)|^{2}_{h_{p}}$, we use the decomposition of the space $L^{2}(X, L_{p})$ as below: \begin{equation}\label{decom}
                                                     L^{2}(X, L_{p})=H^{0}(X, L_{p}) \oplus Y.
                                                   \end{equation}(Since $L^{2}(X, L_{p})$ is an Hilbert space and $H^{0}(X, L_{p})$ is a closed subspace of it, such an orthogonal complementary subspace $Y$ always exists). Since $\beta S \in \mathscr{C}^{\infty}(X, L_{p}) \hookrightarrow L^{2}(X, L_{p})$, it follows from the decomposition (\ref{decom}) that there exists an element $u \in Y$ such that $$u= \beta S - K_{p}(\beta S).$$ Owing to the inclusion $P^{n}(x, R_{p}) \subset B(x, \frac{\delta}{4})$ from (\ref{inclus}) and $\beta(\zeta)=0$ for $\zeta \in B(x, \frac{\delta}{2})$ given previously in the proof, we readily have $\beta=0$ on $P^{n}(x, R_{p})$, so $u=\beta S - K_{p}(\beta S)=-K_{p}(\beta S)$, which is holomorphic on $P^{n}(x, R_{p})$ (defined by the coordinates centered at $x$ provided by Lemma \ref{refe}). Therefore, by using the claim in the beginning, one has
                                                   \begin{equation}\label{est3}
                                                     |K_{p}(\beta S)|^{2}_{h_{p}} = |u(x)|^{2}_{h_{p}} \leq D' A_{p}^{n} \int_{P^{n}(x, R_{p})}{|u|^{2}_{h_{p}} \frac{\omega^{n}}{n!}}.
                                                   \end{equation}We provide an upper bound for the integral on the right-hand side of (\ref{est3}) by Theorem \ref{De4}. For this purpose, let $\tau: [0, \infty) \rightarrow (-\infty, 0]$ be a smooth function defined as follows:
                                                   \begin{equation*}\tau(x):= \begin{cases}
                                                               0, & \mbox{if } x \leq \frac{1}{4} \\
                                                               -x, & \mbox{if}\,\, x \geq \frac{1}{2}.
                                                             \end{cases} \end{equation*}Write $\phi_{\delta}(x)=\delta \tau(\frac{x}{\delta})$. Note that $\tau'$ and $\tau''$ have compact supports within the set $[\frac{1}{4}, \frac{1}{2}]$, and so are $\phi'_{\delta}$ and $\phi''_{\delta}$. This means that there exists a constant $M_{0} >0$ such that $|\phi'_{\delta}(x)| \leq M_{0}$ and $|\phi''_{\delta}(x)| \leq \frac{M_{0}}{\delta}$ for all $x \geq 0$. Define the function \begin{equation*}
                                                                                                       v_{p}(\zeta):= \epsilon \sqrt{A_{p}} \phi_{\delta}(d(x, \zeta)).
                                                                                                     \end{equation*}Since $\phi'_{\delta}$ and $\phi''_{\delta}$ are smooth and have compact supports, we can find a constant $M_{1} >0$ such that \begin{align}\label{lastc}
                                                                                                       \|\overline{\partial}v_{p} \|_{L^{\infty}(X)} & \leq M_{1} \epsilon \sqrt{A_{p}}  \\
                                                                                                       dd^{c} v_{p} & \geq -\frac{M_{1}}{\delta} \epsilon \sqrt{A_{p}} \omega \geq - \frac{M_{1}}{4\theta} \epsilon A_{p} \omega
                                                                                                     \end{align} because of the inequality $\delta > 4\theta \frac{1}{\sqrt{A_{p}}}$. Now we can choose $\epsilon= \frac{1}{8M_{1}}$ for the conditions of Theorem \ref{De4} to hold. Since $\tau(x)=0$ for $x\leq \frac{1}{4}$, we have that $$v_{p}(\zeta)= \epsilon \,\sqrt{A_{p}}\, \phi_{\delta}(d(x, \zeta))= \epsilon\, \sqrt{A_{p}}\, \delta \,\tau(\frac{d(x, \zeta)}{\delta})=0$$ for $d(x, \zeta) \leq \frac{\delta}{4}$. Also, by the inclusion $P^{n}(x, R_{p}) \subset B(x, \frac{\delta}{4})$ given in (\ref{inclus}), we get $v_{p}(\zeta)= 0$ on $P^{n}(x, R_{p})$.
  By the definition of $\beta$, it is seen that $\overline{\partial} u=\overline{\partial} (\beta S)=\overline{\partial} \beta \wedge S$ (because $S$ is holomorphic) has the following (compact) support \begin{equation*}
                                                     U_{\delta}=\{\zeta \in X: \frac{\delta}{2} \leq d(x, \zeta) \leq \frac{3\delta}{4}\},
                                                   \end{equation*}so, for $\zeta \in U_{\delta}$, by the definitions of $U_{\delta}$, $\tau$ and $\phi_{\delta}$ we obtain \begin{equation*}
                                                                                                             v_{p}(\zeta)= \epsilon\, \sqrt{A_{p}} \, \phi_{\delta}(d(x, \zeta))= \epsilon \, \sqrt{A_{p}} \, \delta \, \tau(\frac{d(x, \zeta)}{\delta})= - \epsilon\, \sqrt{A_{p}}\, d(x, \zeta) \leq - \epsilon \sqrt{A_{p}} \,\frac{\delta}{2}.
                                                                                                           \end{equation*}
  By Theorem \ref{De4} and the definition of $\beta$, we have

  \begin{align*}
    \int_{P^{n}(x, R_{p})}{|u|^{2}_{h_{p}} \frac{\omega^{n}}{n!}} & \leq \int_{X}{|u|^{2}_{h_{p}} e^{v_{p}} \frac{\omega^{n}}{n!}} \\
      & \leq \frac{16}{3A_{p}} \int_{U_{\delta}}{|\overline{ \partial}(\beta S)|^{2}_{h_{p}} e^{v_{p}}\frac{\omega^{n}}{n!} } \\
      & \leq \frac{16c}{3A_{p} \delta^{2}} e^{-\epsilon \sqrt{A_{p}} \delta} \int_{U_{\delta}}{|S|^{2}_{h_{p}} \beta \frac{\omega^{n}}{n!} }\\
      & \leq \frac{c}{3} e^{-\sqrt{A_{p}} \delta} \leq c e^{-\sqrt{A_{p}} \delta}.
  \end{align*}Plugging this last inequality into (\ref{est3}) gives \begin{equation*}
                                                                        |K_{p}(\beta S)|^{2}_{h_{p}} \leq D'\, A^{n}_{p}\, c \, e^{-\epsilon \sqrt{A_{p}} \delta},
                                                                      \end{equation*}which gives, by using the inequality (\ref{off2}),
  \begin{equation*}
    \int_{P^{n}(y, R_{p})}{|K_{p}(x, \zeta)|^{2}_{h_{p}} \frac{\omega^{n}_{\zeta}}{n!}} \leq D' \,A^{n}_{p} \, c\, e^{-\epsilon \sqrt{A_{p}} d(x, y)}.
  \end{equation*}From the inequality (\ref{offb}), we infer \begin{equation}\label{lastbk}
                                                                       |K_{p}(x, y)|^{2}_{h_{p}} \leq c \, (D')^{2} A^{2n}_{p} \,e^{-\epsilon \sqrt{A_{p}}d(x, y)},
                                                                     \end{equation}which finalizes the proof.
\end{proof}

\subsection{Linearization} Let $V \subset X$, $U \subset \mathbb{C}^{n}$ be open subsets, and $x_{0} \in V$, $0 \in U$. Let us take a (K\"{a}hler) coordinate chart as follows $\gamma: (V, x_{0}) \rightarrow (U, 0)$, $\gamma(x_{0})=0$. We will use the following notation, so-called linearization of the coordinates on the K\"{a}hler manifold $X$: For any $u, v \in U \subset \mathbb{C}^{n}$, we write $\gamma^{-1}(u)=x_{0}+u$ and $\gamma^{-1}(v)=x_{0}+v$, and
\begin{equation}\label{lincha}
  K_{p}(\gamma^{-1}(u), \gamma^{-1}(v)):= K_{p}(x_{0}+ u, x_{0}+v).
\end{equation}
 Since $0 \in \mathbb{C}^{n}$ and $\mathbb{C}^{n}$ is a complex vector space, we can write $v=0+v$ and $u=0+u$. Linearization means that when we translate $0\in \mathbb{C}^{n}$ by $u$ (or by $v$ for that matter), by thinking of $0 \in \mathbb{C}^{n}$ as $x_{0}\in X$, we can also write $\gamma^{-1}(u)= x_{0}+ u$, so in local coordinates we express the difference between $\gamma^{-1}(u)$ and $x_{0}$ (not meaningful in $X$) by the difference $u-0$ (meaningful in $\mathbb{C}^{n}$ because $\mathbb{C}^{n}$ is a complex vector space). This is also called the abuse of notation in, for instance, \cite{SZ08} and \cite{SZ10}.

\vspace{10mm}
Fix a point $x$  in $X$ and let us take the coordinates centered at $x\equiv 0$, Kahler at $x$, as provided by Lemma $\ref{refe}$, with coordinate polydisk $P^n (x, R)$ for $R>0$. Let $R'>0$ be arbitrary and choose $p\gg1$ so that $R'/\sqrt{A_p}<R$. Modifying the argument in Theorem 2.3 of \cite{Bay17} (see also \cite{Ber}), we consider the following holomorphic functions
\begin{align*}
  \Gamma_{p}(u, v) &= \frac{K_{p}(\frac{u}{\sqrt{A_{p}}}, \frac{\overline{v}}{\sqrt{A_{p}}}) e^{-t_{p}(\frac{u}{\sqrt{A_{p}}}) -\overline{t_{p}(\frac{\overline{v}}{\sqrt{A_{p}}})}}   }{A^{n}_{p}\,\, e^{\frac{2}{A_{p}}\,\langle  \Lambda_{p} u, \overline{v} \rangle} \,\,}\, \\
    & =\frac{K_{p}(\frac{u}{\sqrt{A_{p}}}, \frac{\overline{v}}{\sqrt{A_{p}}}) e^{-t_{p}(\frac{u}{\sqrt{A_{p}}}) -\overline{t_{p}(\frac{\overline{v}}{\sqrt{A_{p}}})}} e^{-\widetilde{\varphi_{p}}(\frac{u}{\sqrt{A_{p}}})} e^{-\widetilde{\varphi_{p}}(\frac{\overline{v}}{\sqrt{A_{p}}})}  }{A^{n}_{p}\,\, e^{\frac{2}{A_{p}}\,\langle \Lambda_{p} u, \overline{v} \rangle} \,\,}\, e^{\widetilde{\varphi_{p}}(\frac{u}{\sqrt{A_{p}}})} e^{\widetilde{\varphi_{p}}(\frac{\overline{v}}{\sqrt{A_{p}}})}
\end{align*} on $\Omega=\{(u, v): u \in P^{n}(0, R'),\,\,v\in P^{n}(0, R')\} \subset \mathbb{C}^{n}_{u} \times \mathbb{C}^{n}_{v}$ in the respective coordinates $u, \,v$; $\Lambda_{p}:=Diag[\lambda^{p}_{1}, \ldots, \lambda^{p}_{n}]$, which is a diagonal matrix whose diagonal entries $\lambda^{p}_{j}$ are positive from the discussions in Section \ref{Sec2}. Let $\Omega_{0}:=\{(u, \overline{u}): u \in P^{n}(0, R')\}$. It follows from Theorem \ref{bkad} and Lemma \ref{refe} (ii) that $\Gamma_{p} \rightarrow 1$ on $\Omega_{0}$. Observe that since $\Gamma_{p}$ is uniformly bounded on $\Omega$, there is a subsequence $\{\Gamma_{p_{d}}\}$ such that $\Gamma_{p_{d}} \rightarrow \Gamma_{0}$ uniformly on $\Omega$, where we must have that $\Gamma_{0} \equiv 1$ on $\Omega_{0}$. Since $\Omega_{0}$ is a maximally totally real submanifold, we get $\Gamma_{0}=1$ on the whole $\Omega$. Since this argument can be applied to any subsequence of $\Gamma_{p}$, we see that $\Gamma_{p} \rightarrow 1$. We make now an observation for $|\Gamma_{p}(u, v)|^{2}$ that will be used in our main theorem. Since $\Lambda_{p}$ has positive diagonal entries, its square root $\Lambda_{p}^{1/2}$ is defined, so we have \begin{equation} \label{srd}\langle \Lambda_{p} u, \overline{v} \rangle= \langle \Lambda_{p}^{1/2} u, \Lambda_{p}^{1/2} \overline{v}  \rangle.\end{equation}

\begin{align}\label{odb}
  |\Gamma_{p}(u, v)|^{2} &= \frac{|K_{p}(\frac{u}{\sqrt{A_{p}}}, \frac{\overline{v}}{\sqrt{A_{p}}})|^{2} e^{-2 \Re{t_{p}(\frac{u}{\sqrt{A_{p}}})}} \,e^{-2 \Re{t_{p}(\frac{\overline{v}}{\sqrt{A_{p}}})}} }{A^{2n}_{p} \,e^{\frac{4}{A_{p}} \Re{ (\langle \Lambda_{p}u, \overline{v}  \rangle)}}} \\
   &= \frac{|K_{p}(\frac{u}{\sqrt{A_{p}}}, \frac{\overline{v}}{\sqrt{A_{p}}})|^{2} e^{-2 \Re{t_{p}(\frac{u}{\sqrt{A_{p}}})}} \,e^{-2 \Re{t_{p}(\frac{\overline{v}}{\sqrt{A_{p}}})}} }{A^{2n}_{p} \,e^{2 \Sigma_{j=1}^{n}{\frac{\lambda^{p}_{j}}{A_{p}}|u_{j}|^{2}}}\,e^{2 \Sigma_{j=1}^{n}{\frac{\lambda^{p}_{j}}{A_{p}}|v_{j}|^{2}}}\, e^{-2 \Sigma_{j=1}^{n}{\frac{\lambda^{p}_{j}}{A_{p}}|u_{j}- \overline{v_{j}}|^{2}}}} \nonumber\\
   &= \frac{|K_{p}(\frac{u}{\sqrt{A_{p}}}, \frac{\overline{v}}{\sqrt{A_{p}}})|^{2} e^{-2 \varphi_{p}(\frac{u}{\sqrt{A_{p}}})} \,e^{-2 \varphi_{p}(\frac{\overline{v}}{\sqrt{A_{p}}})} }{A^{2n}_{p} \,e^{-2 \Sigma_{j=1}^{n}{\frac{\lambda^{p}_{j}}{A_{p}}|u_{j} - \overline{v_{j}}|^{2}}}} e^{\widetilde{\varphi_{p}}(\frac{u}{\sqrt{A_{p}}})} e^{\widetilde{\varphi_{p}}(\frac{\overline{v}}{\sqrt{A_{p}}})} \nonumber,
\end{align}where, in the second equality, we have used (\ref{srd}) and the polarization identity $$\Re{(\langle \mathbf{x}, \mathbf{y} \rangle)}=\frac{1}{2}(\|\mathbf{x}\|^{2} + \|\mathbf{y}\|^{2} - \|\mathbf{x-y}\|^{2})$$ for the vectors $\mathbf{x}=\Lambda^{1/2}_{p} u$ and $\mathbf{y}=\Lambda^{1/2}_{p} \overline{v}$, and in the third equality, we take into account the representation $\varphi_{p}(z)=\Re{t_{p}(z)}+\sum_{j=1}^{m}{\lambda_{j}^{p}|z_{j}|^{2}}+ \widetilde{\varphi}_{p}(z)$ from Lemma \ref{refe} (ii). By the limit argument made above regarding the holomorphic functions $\Gamma_{p}$, the expression (\ref{odb}) for $|\Gamma_{p}(u, v)|^{2}$  and Lemma \ref{refe}(ii), we have \begin{equation*}\label{ofdb} \frac{|K_{p}(\frac{u}{\sqrt{A_{p}}}, \frac{\overline{v}}{\sqrt{A_{p}}})|^{2} e^{-2 \varphi_{p}(\frac{u}{\sqrt{A_{p}}})} \,e^{-2 \varphi_{p}(\frac{\overline{v}}{\sqrt{A_{p}}})} }{A^{2n}_{p} \,e^{-2 \Sigma_{j=1}^{n}{\frac{\lambda^{p}_{j}}{A_{p}}|u_{j} - \overline{v_{j}}|^{2}}}}  \rightarrow 1 \end{equation*} on $P^{n}(0, R')\times P^{n}(0, R')$ as $p\rightarrow \infty$. Therefore, we have proved the next theorem.

\begin{thm} \label{locunif}
    Let $(X, \omega)$ be a compact Kähler manifold of complex dimension $n$ and let a sequence of holomorphic line bundles $\{(L_{p}, h_{p})\}_{p \geq 1}$ be given, each equipped with a $\mathscr{C}^3$-class Hermitian metric $h_p$, satisfying the condition $(\ref{appr})$. Suppose that $\eta_{p}=\frac{\|h_{p}\|^{1/3}_{3}}{\sqrt{A_{p}}} \rightarrow 0$ as $p \rightarrow \infty$. Let $x$ be a fixed point in $X$. Then in  the local K\"{a}hler coordinates centered at  $x$ provided by Lemma \ref{refe}, we have

    \begin{equation}\label{linberg}
                 \frac{|K_{p}(\frac{u}{\sqrt{A_{p}}}, \frac{\overline{v}}{\sqrt{A_{p}}})|^{2} e^{-2 \varphi_{p}(\frac{u}{\sqrt{A_{p}}})} \,e^{-2 \varphi_{p}(\frac{\overline{v}}{\sqrt{A_{p}}})} }{A^{2n}_{p} \,e^{-2 \Sigma_{j=1}^{n}{\frac{\lambda^{p}_{j}}{A_{p}}|u_{j} - \overline{v_{j}}|^{2}}}}  \rightarrow 1
                                                                               \end{equation}
                                                                               locally uniformly on $\mathbb{C}^n _{u}\times \mathbb{C}^n _{v}$ as $p\rightarrow \infty$.
\end{thm}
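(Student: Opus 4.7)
The plan is to encode the normalized Bergman kernel as a family of holomorphic functions $\Gamma_p(u,v)$ on a fixed bidisk, observe that the on-diagonal behavior is controlled by Theorem \ref{bkad}, and then use a holomorphic extension argument across a maximally totally real submanifold to propagate convergence off the diagonal.

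First, in the local Kähler coordinates centered at $x$ from Lemma \ref{refe}, I would fix $R'>0$, restrict to $p$ large enough so that $R'/\sqrt{A_p}<R$, and study
\begin{equation*}
\Gamma_p(u,v)=\frac{K_p\!\bigl(\tfrac{u}{\sqrt{A_p}},\tfrac{\overline{v}}{\sqrt{A_p}}\bigr)\,e^{-t_p(u/\sqrt{A_p})-\overline{t_p(\overline{v}/\sqrt{A_p})}}}{A_p^{\,n}\,e^{(2/A_p)\langle \Lambda_p u,\overline{v}\rangle}}
\end{equation*}
on $\Omega=P^n(0,R')\times P^n(0,R')\subset \mathbb{C}^n_u\times\mathbb{C}^n_v$, where $\Lambda_p=\operatorname{Diag}[\lambda_1^p,\dots,\lambda_n^p]$. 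The denominator is chosen precisely so that, on the totally real slice $\Omega_0=\{(u,\overline{u}):u\in P^n(0,R')\}$, the exponentials reproduce $e^{2\varphi_p(u/\sqrt{A_p})}$ up to the cubic remainder $\widetilde{\varphi_p}$. Each $\Gamma_p$ is holomorphic in $(u,v)$ because $K_p(z,w)$ is holomorphic in $z$ and antiholomorphic in $w$, and the substitution $w=\overline{v}/\sqrt{A_p}$ makes the dependence on $v$ holomorphic.

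Next I would verify the two ingredients needed for a normal-families argument. Uniform boundedness of $\{\Gamma_p\}$ on $\Omega$ follows from the off-diagonal decay of Theorem \ref{udeb}: the factor $|K_p|^2$ is bounded by $G A_p^{2n} e^{-B\sqrt{A_p}d(\cdot,\cdot)}$, while the polarization identity
\begin{equation*}
\tfrac{1}{A_p}\Re\langle\Lambda_p u,\overline{v}\rangle=\tfrac{1}{2A_p}\bigl(\langle\Lambda_p u,\overline{u}\rangle+\langle\Lambda_p v,\overline{v}\rangle-\langle\Lambda_p(u-\overline{v}),\overline{u-\overline{v}}\rangle\bigr)
\end{equation*}
combined with the bounds $\lambda^p_j \asymp A_p$ from (\ref{dioplambda}) gives a quadratic exponential in the denominator that dominates the remaining pieces. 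Pointwise convergence on $\Omega_0$ follows by writing out $|\Gamma_p(u,\overline{u})|$, plugging in Lemma \ref{refe}(ii) to cancel $\Re t_p$ and the quadratic weight, using $|\widetilde{\varphi_p}(u/\sqrt{A_p})|\leq D\|h_p\|_3 |u|^3/A_p^{3/2}=D\eta_p^3|u|^3\to 0$, and applying Theorem \ref{bkad} to $K_p(u/\sqrt{A_p})/A_p^n\to 1$.

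With these two facts, any subsequence of $\{\Gamma_p\}$ has a further subsequence converging locally uniformly to a holomorphic limit $\Gamma_0$ on $\Omega$; but $\Gamma_0\equiv 1$ on $\Omega_0$, and $\Omega_0$ is a maximally totally real submanifold of $\Omega$, so $\Gamma_0\equiv 1$ throughout $\Omega$ by the identity principle. Since the limit is unique, $\Gamma_p\to 1$ locally uniformly on $\Omega$, and hence also $|\Gamma_p|^2\to 1$. Finally, I would use the polarization identity above (now for the full expression $|\Gamma_p(u,v)|^2$) and absorb the $e^{\pm\widetilde{\varphi_p}}$ factors using $\eta_p\to 0$ to rewrite $|\Gamma_p|^2$ in exactly the ratio appearing in (\ref{linberg}), yielding the claim. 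The main obstacle is the step of lifting diagonal convergence to off-diagonal convergence; this is handled cleanly by the totally real uniqueness principle, but it requires the uniform boundedness provided by Theorem \ref{udeb}, which is why that estimate is placed before this theorem in the paper.
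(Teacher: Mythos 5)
Your proposal follows the paper's argument essentially verbatim: same auxiliary holomorphic function $\Gamma_p(u,v)$, same diagonal slice $\Omega_0$, same normal-families plus totally-real identity-principle step, and the same final polarization computation to recover \eqref{linberg}. The one point you make explicit that the paper leaves implicit is the source of the uniform bound on $\{\Gamma_p\}$; your attribution to Theorem~\ref{udeb} works (a Cauchy--Schwarz plus Theorem~\ref{bkad} argument would also suffice), though the polarization identity you display has a small notational slip ($\langle\Lambda_p u,\overline{u}\rangle$ should be $\|\Lambda_p^{1/2}u\|^2$ under the Hermitian convention the paper uses).
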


\section{Asymptotic Normality}\label{lastsec}
\subsection{Random holomorphic sections and random zero currents of integration} Let $s\in H^{0}(X, L)\backslash \{0\}$. We denote by $Z_{s}$ the set of zeros of $s$. $Z_s$ is a purely $1$-codimensional analytic variety of $X$. By the symbol $[Z_{s}]$, we mean the current of integration on $Z_{s}$, defined by $$\langle [Z_{s}], \varphi \rangle =\int_{Z_{s}}{\varphi},$$here $\varphi\in \mathcal{D}^{n-1, n\emph{}-1}(X)$, which represents the space of test forms of type $(n-1, n-1)$. The random variable $\langle Z_{s}, \varphi \rangle$ is called the \textit{smooth linear statistic} for the zero set $Z_{s}$.

A complex random variable $W$ is said to be standard Gaussian in case $W= X + \sqrt{-1} Y$, where $X$ and $Y$ are i.i.d. centered real Gaussian distributions of variance $1/2$.
The Poincar\'{e}-Lelong formula is one indispensable instrument that will be of use in the proof of our main theorem. Let us state it for a zero current associated with the zero set of a holomorphic section $s_{p}\in H^{0}(X, L_{p})$. \begin{equation}\label{poinle}
  [Z_{s_{p}}]= dd^c \log{|s_{p}|_{h_{p}}} + c_{1}(L_{p}, h_{p}).
\end{equation}Here and throughout $dd^c= \frac{\sqrt{-1}}{\pi} \partial \overline{\partial}$.

\subsection{Asymptotic normality of random zero currents}
Now we are ready to prove our main theorem. For the proof, we use the arguments from \cite{SZ10} and \cite{Bay17}.

\begin{proof}[Proof of Theorem \ref{maint}]
To begin with, we modify the information about analytic functions from the introduction to suit our present setting. The normalized Gaussian processes $\alpha_{p}$ on $X$ will be constructed as follows: We take a measurable section $e_{L_{p}}$ of $L_{p}$ such that $e_{L_{p}}: X \rightarrow L_{p}$ with $|e_{L_{p}}(x)|_{h_{p}}=1$ for any $x\in X$. We pick now an orthonormal basis $\{s^{j}_{p}\}^{d_{p}}_{j=1}$ of $H^{0}(X, L_{p})$, where  $s^{j}_{p}= \varphi^{j}_{p} e_{L_{p}}$. Let us write \begin{equation}\label{pro}
                                                                f^{j}_{p}(x)=\frac{\varphi^{j}_{p}(x)}{\sqrt{K_{p}(x)}},\,\,j=1, 2, \ldots, d_{p}.
                                                              \end{equation} Notice that $|\varphi^{j}_{p}|=|s^{j}_{p}|_{h_{p}}$ and $\sum_{j=1}^{d_{p}}{|f^{j}_{p}(x)|^{2}}=1$ by the relation (\ref{diags}). Therefore, we can express a normalized complex Gaussian process on $X$ for each $p\in \mathbb{N}$ as follows: \begin{equation}\label{cgp}
                                                                         \alpha_{p}=\sum_{j=1}^{d_{p}}{b_{j}f^{j}_{p}},
                                                                       \end{equation}where the coefficients $b_{j}$ are i.i.d. complex Gaussian centered random variables with variance one. We observe that a random holomorphic section $s_{p}=\sum_{j=1}^{d_{p}}{b_{j}s_{p}^{j}}$ can be represented as \begin{equation}\label{sect}
                                                                        s_{p}=\sum_{j=1}^{d_{p}}{b_{j}s_{p}^{j}}=\sqrt{K_{p}(x)} \alpha_{p} e_{L_{p}},
                                                                     \end{equation} which indicates the presence of the normalized complex Gaussian process.  The relation (\ref{sect}) gives us that \begin{equation}\label{norm}
              |\alpha_{p}(x)|=\frac{|s_{p}(x)|_{h_{p}}}{\sqrt{K_{p}(x)}}.
            \end{equation}

We proceed to compute the covariance functions $\mathcal{C}_{p}$ of the complex Gaussian processes $\alpha_{p}$. We observe from the fact that the complex Gaussian random coefficients $b_{j}$ in (\ref{cgp}) are centered, i.i.d., and have variance one \begin{equation}\label{last}\text{Var}[b_{j}]=\mathbb{E}[|b_{j}|^{2}]=1,\,\,\,\mathbb{E}[b_{k}\bar{b_{l}}]=0\,\,\,\text{if}\,\,k\neq l.\end{equation} By the relation (\ref{cov}), (\ref{cgp}) and (\ref{last}), we have \begin{equation}\label{cov1}
                      \mathcal{C}_{p}(x, y)=\mathbb{E}[\sum_{j=1}^{d_{p}}{b_{j}f^{j}_{p}(x)}\,  \overline{\sum_{j=1}^{d_{p}}{b_{j}f^{j}_{p}(y)}}]=\sum_{j=1}^{d_{p}}{f^{j}_{p}(x) \overline{f^j_{p}(y)}}.
                    \end{equation}

Next, after a series of computations, we arrive at \begin{equation}\label{normb}
                                                    |K_{p}(x, y)|_{h_{p,x} \otimes h^{*}_{p, y}}= \sqrt{\sum_{j=1}^{d_{p}}{|\varphi^{j}_{p}(x)|^{2} |\varphi^{j}_{p}(y)|^{2}} + \sum_{\substack{j=1\\
                                                    j < k}}^{d_{p}}{2 \Re{(\varphi^{j}_{p}(x) \,\overline{\varphi^{j}_{p}(y)}\,\overline{\varphi^{k}_{p}(x)} \, \varphi^{k}_{p}(y)}})}
                                                  \end{equation}

By putting together (\ref{cov1}), (\ref{normb}) and (\ref{pro}), we find \begin{equation}\label{coveb}\widehat{\mathcal{K}}_{p}(x, y)= |\mathcal{C}_{p}(x, y)|.\end{equation}

Take $\lambda(\rho)=\log{\rho}$ and $(G, \sigma)=(X, \frac{\omega^{n}}{n!})$. In the rest of the proof, to make the notation lighter, we will write $d\vartheta_{X}:=\frac{w^{n}}{n!}$ for the (Riemannian) volume form on $X$. Let us fix a $(n-1, n-1)$ real-valued form $\phi$ with $\mathscr{C}^{3}$ coefficients. Then \begin{equation*}\label{testf}
                                                                                       dd^c \phi=\frac{\sqrt{-1}}{\pi}\partial \bar{\partial} \phi=\psi \, d\vartheta_{X},
                                                                                     \end{equation*}where the function $\psi$ is a real-valued $\mathscr{C}^1$ function on $X$. By invoking the Poincar\'{e}-Lelong formula (\ref{poinle}), the non-linear random functional given by (\ref{nonlin}) assumes the subsequent form in our case,
\begin{equation}\label{samev}\mathcal{F}^{\psi}_{p}(\alpha_{p})=\int_{X}{\big( \log{|s_{p}|_{h_{p}}- \log{\sqrt{K_{p}(x)}}} \big)\frac{\sqrt{-1}}{\pi}\partial \overline{\partial}\phi(x)}= \langle [Z_{s_{p}}], \phi \rangle + \zeta_{p, L_{p}},\end{equation}where $\zeta_{p, L_{p}}:= \big\langle-c_{1}(L_{p}, h_{p}) - dd^{c}\log{\sqrt{K_{p}(x)}}, \phi \big\rangle$, namely, $\zeta_{p, L_{p}}$ is a constant depending only on the geometric data $(L_{p}, h_p)$. Thanks to a standard property of variance, the expression (\ref{samev}) shows that $\mathcal{F}_{p}^{\psi}(\alpha_{p})$ and $\langle [Z_{s_{p}}], \phi \rangle$ have the equal variances.

For the remaining part of the proof, our goal is to validate the fulfillment of the requirements (i) and (ii) of Theorem \ref{sots} for the current setting. First, with $\lambda(\rho)=\log{\rho}$ being increasing, we only consider the case where $\nu=1$. To use both far off-diagonal and near-diagonal asymptotics, we split the integration regions accordingly: $d(x, y) \leq {\frac{\log{A_{p}}}{\sqrt{A_{p}}}}$ and $d(x, y) \geq {\frac{\log{A_{p}}}{\sqrt{A_{p}}}}$. Let us start with the simpler condition (ii). For the integral on the far off-diagonal set where $d(x, y) \geq {\frac{\log{A_{p}}}{\sqrt{A_{p}}}}$, by Theorem \ref{udeb}, we have,  \begin{equation*}
  \lim_{p \rightarrow \infty} \sup_{x\in X}\int_{d(x, y) \geq {\frac{\log{A_{p}}}{\sqrt{A_{p}}}}}{\widehat{\mathcal{K}}_{p}(x, y)d\vartheta_{X}(y)} \leq \lim_{p \rightarrow \infty} \sup_{x\in X}\int_{d(x, y) \geq {\frac{\log{A_{p}}}{\sqrt{A_{p}}}}}{G e^{-B \sqrt{A_{p}}d(x, y)}d\vartheta_{X}(y)}=0.
\end{equation*}For the integral over the near-diagonal set where $d(x, y) \leq {\frac{\log{A_{p}}}{\sqrt{A_{p}}}}$, due to the relations (\ref{coveb}) and (\ref{cov}), we get \begin{equation*}
                        \lim_{p \rightarrow \infty} \sup_{x\in X}\int_{d(x, y) \leq {\frac{\log{A_{p}}}{\sqrt{A_{p}}}}}{\widehat{\mathcal{K}}_{p}(x, y)d\vartheta_{X}(y)} \leq \lim_{p \rightarrow \infty} \sup_{x\in X}\int_{d(x, y) \leq {\frac{\log{A_{p}}}{\sqrt{A_{p}}}}}{1\,\,d\vartheta_{X}(y)}=0.
                        \end{equation*}

Our next step is to affirm the condition (i). For the integral on the far off-diagonal set where $d(x, y) \geq \frac{\log{A_{p}}}{\sqrt{A_{p}}}$, as $p \rightarrow \infty$, the integrand of the numerator approaches zero more rapidly compared to that of the denominator because, by Theorem \ref{udeb}, the corresponding decaying orders (to zero) for numerator and denominator are  $O(A^{-\epsilon}_{p})$\,and\, $O(A^{-\epsilon/2}_{p})$, respectively.

Finally, by the linearization (\ref{lincha}) on the neighborhood $P^{n}(x, R)$, where we have the K\"{a}hler coordinates at the point $x$ provided by Lemma \ref{refe}, and by fixing $x\in X$ in the support of $\phi$ at which the supremum is realized in the denominator, we verify that the lower limit below will be strictly positive on the near-diagonal set where $\{ \frac{|v|}{\sqrt{A_{p}}} \leq \frac{\log{A_{p}}}{\sqrt{A_{p}}}\}:$

\begin{equation*}\label{st22}
\liminf_{p \rightarrow \infty}{\frac{\int_{X}\int_{|v|\leq \log{A_{p}}}{\widehat{\mathcal{K}}_{p}^{2}(x, x+ \frac{v}{\sqrt{A_{p}}}) \psi(x) \psi(x+\frac{v}{\sqrt{A_{p}}})\,dv\, d\vartheta_{X}(x)}}{\int_{|v|\leq \log{A_{p}}}{ \widehat{\mathcal{K}}_{p}(x, x+ \frac{v}{\sqrt{A_{p}}})\,dv\,}}}>0.
\end{equation*}

Let \begin{equation*}\label{lastj}
      J(p):= \frac{\int_{X}\int_{|v|\leq \log{A_{p}}}{\widehat{\mathcal{K}}_{p}^{2}(x, x+ \frac{v}{\sqrt{A_{p}}}) \psi(x) \psi(x+\frac{v}{\sqrt{A_{p}}})\,dv\, d\vartheta_{X}(x)}}{\int_{|v|\leq \log{A_{p}}}{ \widehat{\mathcal{K}}_{p}(x, x+ \frac{v}{\sqrt{A_{p}}})\,dv\,}}.
    \end{equation*}Let us examine the numerator and the denominator separately. By using the left part of the inequality (\ref{feb}) for the denominator and the right part of the same inequality for the numerator, we get

\begin{align}\label{febsed}
 &\int_{X}\int_{|v|\leq \log{A_{p}}}{\widehat{\mathcal{K}}_{p}^{2}(x, x+ \frac{v}{\sqrt{A_{p}}}) \psi(x) \psi(x+\frac{v}{\sqrt{A_{p}}})\,dv\, d\vartheta_{X}(x)} \notag \\
& \quad \geq \int_{X} \int_{|v| \leq \log{A_{p}}} {(\frac{4}{5})^{2n} \frac{|K_{p}(x, x+\frac{v}{\sqrt{A_{p}}})|^{2}_{h_{p}}\, \psi(x)\, \psi(x+\frac{v}{\sqrt{A_{p}}})}{A^{2n}_{p} \,(1+D' \eta^{2/3}_{p})^{2}}\, dv \,d\vartheta_{X}(x)} :=I_{1}(p).
\end{align}

and

\begin{equation}\label{febused2}
\int_{|v|\leq \log{A_{p}}}{ \widehat{\mathcal{K}}_{p}(x, x+ \frac{v}{\sqrt{A_{p}}})\,dv\,}\\
 \leq \int_{|v| \leq \log{A_{p}}} {(\frac{4}{3})^{n} \frac{|K_{p}(x, x+\frac{v}{\sqrt{A_{p}}})|_{h_{p}}}{A^{n}_{p} \,(1 - D' \eta^{2/3}_{p})}\, dv} :=I_{2}(p),
\end{equation}which implies, by extracting the weight functions and multiplying and dividing both the integrand of $I_{1}$ and $I_{2}$ by $e^{-2\Sigma_{j=1}^{n}{\frac{\lambda^{p}_{j}}{A_{p}} |v_{j}|^{2}}}$ and $e^{-\Sigma_{j=1}^{n}{\frac{\lambda^{p}_{j}}{A_{p}} |v_{j}|^{2}}}$ respectively

\begin{align}\label{febusedd}
I_{1}(p)=\int_{X} \int_{|v| \leq \log{A_{p}}} {(\frac{4}{5})^{2n} \frac{|K_{p}(x, x+\frac{v}{\sqrt{A_{p}}})|^{2} \,e^{-2\varphi_{p}(x)} e^{-2\varphi_{p}(x+\frac{v}{\sqrt{A_{p}}})}\, \psi(x)\, \psi(x+\frac{v}{\sqrt{A_{p}}})}{A^{2n}_{p} \,(1+D' \eta^{2/3}_{p})^{2} e^{-2\Sigma_{j=1}^{n}{\frac{\lambda^{p}_{j}}{A_{p}} |v_{j}|^{2}}}}\,e^{-2\Sigma_{j=1}^{n}{\frac{\lambda^{p}_{j}}{A_{p}} |v_{j}|^{2}}} dv \,d\vartheta_{X}(x)}
\end{align}
and
\begin{equation}\label{febused22}
I_{2}(p)= \int_{|v| \leq \log{A_{p}}} {(\frac{4}{3})^{n} \frac{|K_{p}(x, x+\frac{v}{\sqrt{A_{p}}})| e^{-\varphi_{p}(x)} e^{-\varphi_{p}(x+ \frac{v}{\sqrt{A_{p}}})}}{A^{n}_{p} \,(1 - D' \eta^{2/3}_{p}) e^{-\Sigma_{j=1}^{n}{\frac{\lambda^{p}_{j}}{A_{p}} |v_{j}|^{2}}}}\,e^{-\Sigma_{j=1}^{n}{\frac{\lambda^{p}_{j}}{A_{p}} |v_{j}|^{2}}}\, dv}. \end{equation}

By (\ref{febsed}), (\ref{febused2}), (\ref{febusedd}) and (\ref{febused22}), we obtain

\begin{align}\label{sonn1}
   J(p) \geq \frac{I_{1}(p)}{I_{2}(p)} = \frac{\int_{X} \int_{|v| \leq \log{A_{p}}} {(\frac{4}{5})^{2n} \frac{|K_{p}(x, x+\frac{v}{\sqrt{A_{p}}})|^{2} \,e^{-2\varphi_{p}(x)} e^{-2\varphi_{p}(x+\frac{v}{\sqrt{A_{p}}})}\, \psi(x)\, \psi(x+\frac{v}{\sqrt{A_{p}}})}{A^{2n}_{p} \,(1+D' \eta^{2/3}_{p})^{2} e^{-2\Sigma_{j=1}^{n}{\frac{\lambda^{p}_{j}}{A_{p}} |v_{j}|^{2}}}}\,e^{-2\Sigma_{j=1}^{n}{\frac{\lambda^{p}_{j}}{A_{p}} |v_{j}|^{2}}} dv \,d\vartheta_{X}(x)}}{\int_{|v| \leq \log{A_{p}}} {(\frac{4}{3})^{n} \frac{|K_{p}(x, x+\frac{v}{\sqrt{A_{p}}})| e^{-\varphi_{p}(x)} e^{-\varphi_{p}(x+ \frac{v}{\sqrt{A_{p}}})}}{A^{n}_{p} \,(1 - D' \eta^{2/3}_{p}) e^{-\Sigma_{j=1}^{n}{\frac{\lambda^{p}_{j}}{A_{p}} |v_{j}|^{2}}}}\,e^{-\Sigma_{j=1}^{n}{\frac{\lambda^{p}_{j}}{A_{p}} |v_{j}|^{2}}}\, dv}}.
\end{align}

 Since $\psi \in \mathscr{C}^{1}$, it follows that $\psi(x+ \frac{v}{\sqrt{A_{p}}})=\psi(x)+ O(\frac{|v|}{\sqrt{A_{p}}})$. Now, as $p\rightarrow \infty$ on the inequality (\ref{sonn1}), making use of (\ref{ofdb}) and (\ref{limitl}) yields that $$\liminf_{p \rightarrow \infty}{J(p)} \geq {\frac{\int_{X}{\psi^{2}(x) d\vartheta_{X}(x)} \int_{\mathbb{C}^{n}}{(\frac{4}{5})^{2n}{\ e^{-\pi\Sigma_{j=1}^{n}{ |v_{j}|^{2}}}}dv}}{\int_{\mathbb{C}^{n}}{(\frac{4}{3})^{n} e^{-\frac{\pi}{2} \Sigma_{j=1}^{n}{|v_{j}|^{2}}}\, dv}}} =\frac{6^{n}}{5^{2n}} \int_{X}{\psi^{2}(x) d\vartheta_{X}(x)} >0,$$ which ends the proof. \end{proof}

In the papers \cite{CMM} and \cite{BCM}, the authors use a more general hypothesis between the first Chern currents and the K\"{a}hler form. More precisely, they assume  \begin{equation}\label{divergent}
	 c_{1}(L_{p}, h_{p}) \geq a_{p}\, \omega  \, \, \, \text{for all} \, \,  p\geq 1 \, \,  \text{and} \,  \,  a_{p} >0,  \, \, \text{such that} \, \, \, a_{p} \rightarrow \infty.
\end{equation}  In our setting, the role of $a_{p}$ is played by $A_{p}$ (which was defined as $A_{p}=\int_{X}{c_{1}(L_{p}, h_{p}) \wedge \omega^{n-1}}$ in \cite{CMM} and \cite{BCM}), and there are two different limits because of the diophantine approximation relation (\ref{appr}) between $\lambda^{p}_{j}$ and $A_{p}$. In the case of (\ref{divergent}), we do not have (\ref{limitl}) (and consequently (\ref{limitl1})). However, as Theorem 1.3 in \cite{CMM} shows, there still exists a limit: $\lim_{p\rightarrow \infty}\frac{K_{p}(x)}{\lambda^{p}_{1} \ldots \lambda^{p}_{n}}=(\frac{2}{\pi})^{n}$. Despite the existence of the limit in terms of $\lambda^{p}_{j}$, we do not know whether the limit $\lim_{p\rightarrow \infty}\frac{\lambda^{p}_{j}}{A_{p}}$ exists, which is crucial in the proof of Theorem \ref{maint}. Therefore, the arguments followed in this paper cannot be used to prove a central limit theorem in this framework.

\vspace{5mm}

\textbf{Acknowledgement:} We thank Turgay Bayraktar for his interest in and comments on this work. We thank the anonymous referee for his/her careful review, suggestions and corrections, which helped enhance the presentation of this paper.

%%%%%%%%%%%%%%%%%%%%%%%%%%%%%%%%%%%%%%%%%%%%%%%%%%%%%%%%%%%

{}


\begin{thebibliography}{}
	


\bibitem[Bay17]{Bay17}
T.~Bayraktar.
\newblock Asymptotic normality of linear statistics of zeros of random polynomials.
\newblock {\em Proc. Amer.
Math. Soc.}, 145 , 2917–2929, 2017


\bibitem[BCM]{BCM}
T.~Bayraktar, D.~Coman, and G.~Marinescu.
\newblock Universality results for zeros of random holomorphic sections.
\newblock {\em Trans. Amer. Math. Soc.}, 373(6): 3765-3791(DOI:10.1090/tran/7807), 2020.

\bibitem[BCHM18]{BCHM}
T.~Bayraktar, D.~Coman, H.~Herrmann, and G.~Marinescu.
\newblock A survey on zeros of random holomorphic sections.
\newblock {\em Dolomites Res. Notes Approx.}, 11(4):1--19, 2018.

\bibitem[Ber]{Ber}
Berman, R.J.
\newblock {\em Determinantal Point Processes and Fermions on Polarized Complex Manifolds: Bulk Universality In: Hitrik, M., Tamarkin, D., Tsygan, B., Zelditch, S. (eds)}, volume 269 of {\em  Algebraic and Analytic Microlocal Analysis, AAMA 2013}.
\newblock Springer.
\newblock Springer Proceedings in Mathematics and Statistics, 2018.



\bibitem[Brn]{Brn}
B. Berndtsson.
\newblock Bergman kernels related to Hermitian line bundles over compact complex manifolds.
\newblock {\em Explorations in complex and Riemannian geometry}, Contemp. Math.  332, 1--17, 2003.

\bibitem[BBL]{BBL}
E. Bogomolny, O. Bohigas, and P. Leboeuf,
\emph{Quantum chaotic dynamics and random polynomials},
\emph{J. Statist. Phys.} \textbf{85}(5-6), 639–679 (1996).

\bibitem[BP]{BP}
A. Bloch and G. Pólya.
\newblock On the roots of certain algebraic equations.
\newblock {\em Proc. London Math. Soc.}, \textbf{33}:102--114, 1932.

\bibitem[BG]{BG}
A. Bojnik, O. Günyüz.
\newblock , Random systems of holomorphic sections of a sequence of line bundles on compact K\"{a}hler manifolds.
\newblock {\em 	arXiv:2401.08243}.

\bibitem[BG1]{BG1}
A. Bojnik, O. Günyüz,
\newblock Asymptotic normality of divisors of random holomorphic sections on non-compact complex manifolds.
\newblock {\em J. Math. Anal. and Apps.}, 539(1), Part 1, 2024.

\bibitem[BSZ]{BSZ}
P. Bleher, B. Shiffman, and S. Zelditch.
\newblock Universality and scaling of correlations between zeros on complex manifolds.
\newblock {\em Invent. Math.}, \textbf{142}(2):351--395, 2000.

\bibitem[CM]{CM}
D. Coman, G. Marinescu.
\newblock Convergence of Fubini–Study currents for orbifold line
bundles.
\newblock {\em Internat. J. Math.}, 2013 (Art. Id. 1350051).



\bibitem[CMM]{CMM}
D. Coman, X. Ma, and G. Marinescu.
\newblock Equidistribution for sequences of line bundles on normal
K\"{a}hler spaces
\newblock {\em Geom. Topol.}, 21(2): 923--962, 2017.

\bibitem[CLMM]{CLMM}
D. Coman, W. Lu, X. Ma, and G. Marinescu.
\newblock Bergman kernels and equidistribution for sequences of line bundles on K\"{a}hler manifolds.
\newblock {\em Adv. Math.}, 414: 923--962, 2017.

\bibitem[Dem12]{Dem12}
J.-P. Demailly.
\newblock {\em Complex analytic and differential geometry.}
\newblock http://www-fourier.ujf-grenoble.fr/~demailly/manuscripts/agbook.pdf,
  2012.

\bibitem[Dem82]{Dem82}
J.-P. Demailly.
\newblock  Estimations $L^{2}$ pour l'op\'{e}rateur $\tilde{\partial}$ d'un fibr\'{e} vectoriel holomorphe semi-positif au-dessus d'une vari\'{e}t\'{e} K\"{a}hl\'{e}rienne compl\`{e}te.
\newblock {\em Ann. Sci. École Norm. Sup.}, 15(3): 457-511, 1982.

\bibitem[DS06]{DS06}
T.-C. Dinh and N.~Sibony.
\newblock Distribution des valeurs de transformations m\'eromorphes et
applications.
\newblock {\em Comment. Math. Helv.}, 81(1):221--258, 2006.


\bibitem[Li]{Li}
N. Lindholm.
\newblock Sampling in weighted $L^{p}$ spaces of entire functions in $\mathbb{C}^{n}$ and estimates of the Bergman
kernel.
\newblock {\em J. Funct. Anal.}, 182: 390--426, 2001.


\bibitem[MM1]{MM1}
X. Ma and G. Marinescu.
\newblock {\em Holomorphic Morse Inequalities and Bergman Kernels}, volume 254 of {\em Progress in Mathematics}.
\newblock Birkh\"{a}user, Basel, 2007.






































\bibitem[ET50]{ET50}
P.~Erd{\"o}s and P.~Tur{\'a}n.
\newblock On the distribution of roots of polynomials.
\newblock {\em Ann. of Math. (2)}, 51:105--119, 1950.





\bibitem[HAM56]{HAM56}
J.~M. Hammersley.
\newblock The zeros of a random polynomial.
\newblock In {\em Proceedings of the {T}hird {B}erkeley {S}ymposium on
  {M}athematical {S}tatistics and {P}robability, 1954--1955, vol. {II}}, pages
  89--111, Berkeley and Los Angeles, 1956. University of California Press.


\bibitem[Hann]{Hann}
J. H. Hannay
\newblock  Chaotic analytic zero points: exact statistics for those of a random spin state.
\newblock {\em Journal of Physics A}, 29: 101--105, 1996.







\bibitem[Kac43]{Kac43}
M.~Kac.
\newblock On the average number of real roots of a random algebraic equation.
\newblock {\em Bull. Amer. Math. Soc.}, 49:314--320, 1943.





\bibitem[LO43]{LO43}
J.~E. Littlewood and A.~C. Offord.
\newblock On the number of real roots of a random algebraic equation. {III}.
\newblock {\em Rec. Math. [Mat. Sbornik] N.S.}, 12(54):277--286, 1943.


\bibitem[MM1]{MM1}
X. Ma and G. Marinescu.
\newblock {\em Holomorphic Morse Inequalities and Bergman Kernels}, volume 254 of {\em Progress in Mathematics}.
\newblock Birkh\"{a}user, Basel, 2007.



\bibitem[NS]{NS}
F. Nazarov and M. Sodin.
\newblock Correlation functions for random complex zeros: strong clustering and local universality.
\newblock {\em Comm.Math.Phys.}, 310(1):75–-98,2012.


\bibitem[NV98]{NV98}
S.~Nonnenmacher and A.~Voros.
\newblock Chaotic eigenfunctions in phase space.
\newblock {\em J. Statist. Phys.}, 92(3-4):431--518, 1998.














\bibitem[ST]{ST}
M. Sodin and B. Tsirelson.
\newblock Random complex zeroes. I. Asymptotic normality
\newblock {\em Israel J. Math.}, $\mathbf{\text{144}}$:125–149, 2004. 2, 26





\bibitem[SZ99]{SZ99}
B.~Shiffman and S.~Zelditch.
\newblock Distribution of zeros of random and quantum chaotic sections of
  positive line bundles.
\newblock {\em Comm. Math. Phys.}, 200(3):661--683, 1999.

\bibitem[SZ08]{SZ08}
B.~Shiffman and S.~Zelditch.
\newblock Number variance of random zeros on complex manifolds.
\newblock {\em Geom. Funct. Anal.}, 18: 1422--1475, 2008.

\bibitem[SZ10]{SZ10}
B.~Shiffman and S.~Zelditch.
\newblock Number variance of random zeros on complex manifolds, II: Smooth Statistics.
\newblock {\em Pure and Applied Mathematics Quarterly}, 6(4): 1145--1167, 2010.


\end{thebibliography}
 \end{document}